\theoremstyle{definition}
\newtheorem{definition}{Definition}[section]
\theoremstyle{plain}
\newtheorem{lemma}[definition]{Lemma}
\newtheorem{theorem}[definition]{Theorem}
\newtheorem{proposition}[definition]{Proposition}
\newtheorem{conjecture}[definition]{Conjecture}
\theoremstyle{remark}
\newtheorem{remark}[definition]{Remark}
\newtheorem{example}[definition]{Example}
\newcommand{\bA}{\mathbb{A}}
\newcommand{\bC}{\mathbb{C}}
\newcommand{\bH}{\mathbb{H}}
\newcommand{\bN}{\mathbb{N}}
\newcommand{\bP}{\mathbb{P}}
\newcommand{\bQ}{\mathbb{Q}}
\newcommand{\bZ}{\mathbb{Z}}
\newcommand{\calF}{\mathcal{F}}
\newcommand{\calH}{\mathcal{H}}
\newcommand{\calL}{\mathcal{L}}
\newcommand{\calM}{\mathcal{M}}
\newcommand{\calO}{\mathcal{O}}
\newcommand{\calX}{\mathcal{X}}
\DeclareMathOperator{\coker}{coker}
\DeclareMathOperator{\Gr}{Gr}
\DeclareMathOperator{\Hom}{Hom}
\DeclareMathOperator{\im}{im}
\DeclareMathOperator{\coim}{coim}
\DeclareMathOperator{\Nef}{Nef}
\DeclareMathOperator{\rank}{rank}
\numberwithin{figure}{section}
\numberwithin{table}{section}
\begin{document}

\title[The Mirror Clemens-Schmid Sequence]{The Mirror Clemens-Schmid Sequence}

\author[C. F. Doran]{Charles F. Doran}
\address{Department of Mathematical and Statistical Sciences, 632 CAB, University of Alberta,  Edmonton, AB, T6G 2G1, Canada. \newline \indent Bard College, Annandale-on-Hudson, NY 12571, USA. \newline \indent Center of Mathematical Sciences and Applications, Harvard University, 20 Garden Street, Cambridge, MA 02138, USA.}
\email{charles.doran@ualberta.ca}
\thanks{Charles F. Doran (Alberta/Bard College/CMSA Harvard) was supported by the Natural Sciences and Engineering Research Council of Canada (NSERC)}

\author[A. Thompson]{Alan Thompson}
\address{Department of Mathematical Sciences, Loughborough University, Loughborough, Leicestershire, LE11 3TU, United Kingdom.}
\email{A.M.Thompson@lboro.ac.uk}
\thanks{Alan Thompson (Loughborough) was supported by Engineering and Physical Sciences Research Council (EPSRC) New Investigator Award EP/V005545/1.}

\begin{abstract}We introduce a four-term long exact sequence that relates the cohomology of a smooth variety admitting a projective morphism onto a projective base to the cohomology of the open set obtained by removing the preimage of a general linear section. We show that this sequence respects the perverse Leray filtration and induces exact sequences of mixed Hodge structures on its graded pieces. We conjecture that this exact sequence should be thought of as mirror to the Clemens-Schmid sequence, which describes the cohomology of degenerations. We exhibit this mirror relationship explicitly for all Type II and many Type III degenerations of K3 surfaces. In three dimensions, we show that for Tyurin degenerations of Calabi-Yau threefolds our conjecture is a consequence of existing mirror conjectures, and we explicitly verify our conjecture for a more complicated degeneration of the quintic threefold.
\end{abstract}

\subjclass[2020]{14D06 (14C30, 14J28, 14J30, 14J33) \\ Keywords: degenerations, fibrations, mirror symmetry, Calabi-Yau manifold, K3 surface.}

\date{}
\maketitle

\section{Introduction}

\subsection{Motivation} The motivation behind this work comes from mirror symmetry. In \cite{mstdfcym} the authors, along with A. Harder, postulated a mirror symmetric correspondence between a certain type of degeneration of a Calabi-Yau variety, called a \emph{Tyurin degeneration}, and codimension one fibrations on the mirror Calabi-Yau variety. This correspondence is expected to identify the action of the logarithm of monodromy around the degenerate fibre on the degeneration side with the cup product with a fibre on the fibration side.

An important tool for studying the structure of degenerations is the \emph{Clemens-Schmid sequence}: a four-term long exact sequence of mixed Hodge structures that relates the geometry of the central fibre of the degeneration to the geometry of a nearby smooth fibre and encodes the action of the logarithm of monodromy around the degenerate fibre. This gives rise to the following natural question: \emph{Is there a mirror to the Clemens-Schmid sequence that encodes the structure of a fibration and the action of the cup-product map?}

Part of the structure of this mirror sequence is clear: the cohomology of a nearby fibre, equipped with the action of the logarithm of monodromy, should be mirror to the cohomology of the total space of the fibration, equipped with the action of cup product with a fibre. Moreover, classical mirror symmetry tells us how to mirror the Hodge filtration: this is the usual exchange of Hodge numbers from which mirror symmetry takes its name. However, the mirror to the monodromy weight filtration is less clear. In \cite{pwp}, Katzarkov, Przyjalkowski, and Harder postulate that, in the context of Fano-LG mirror symmetry, the mirror to a monodromy weight filtration should be a perverse Leray filtration. The cohomology of any fibred variety can be equipped with a perverse Leray filtration in a natural way, so this seems like a good candidate in our setting too. One might therefore expect the mirror to the Clemens-Schmid sequence to be a four-term sequence on the cohomology of a fibred variety which respects the Hodge and perverse Leray filtrations and includes the cup product with a fibre.

%As further evidence for our approach, in the introduction of \cite{htam} de Cataldo and Migliorini noted that the perverse Leray filtration and the action of cup product display a ``striking similarity with the structures discovered on the cohomology of the limit fiber... in the case of degenerating families along a normal crossing divisor, where the logarithms of the monodromies, instead of the cup products with Chern classes of line bundles, are the endomorphisms giving the filtrations''. We claim that this similarity is, in fact, a manifestation of mirror symmetry.

\subsection{Description of the results} The following is the main result of this paper.

\begin{theorem} \label{thm:mainthm} Let $\pi \colon Y \to B$ be a projective, surjective morphism from a smooth complex projective variety $Y$ to a complex projective variety $B$. Fix an embedding $B \subset \bP^N$ and let $Z \subset Y$ be the preimage under $\pi$ of a general linear subspace. Let $U := Y - Z$ denote the complement of $Z$ and let $j\colon U \hookrightarrow Y$ denote the inclusion. Then the four-term sequence
\[\begin{tikzcd}[cramped]\to H^{k}_c(U) \ar[r,"{j_!}"] & H^{k}(Y) \ar[r,"{[Z]\cup -}"] & H^{k+2}(Y)(1) \ar[r,"{j^*}"]  & H^{k+2}(U)(1) \ar[r]& H^{k+2}_c(U) \to
\end{tikzcd}\]
is an exact sequence of mixed Hodge structures on the graded pieces of the perverse Leray filtration (here $(1)$ denotes the usual Tate twist).
\end{theorem}

Motivated by the discussion above, we call this sequence the \emph{mirror Clemens-Schmid sequence}. Note, however, that Theorem \ref{thm:mainthm} holds in much greater generality than the motivating situation presented above might suggest: $\pi \colon Y \to B$ is any projective, surjective morphism, not just a codimension $1$ fibration. On the other hand, the usual Clemens-Schmid sequence also holds for arbitrary semistable degenerations, not just Tyurin degenerations. Comparing the two sequences leads us to a general conjecture relating semistable degenerations of a Calabi-Yau to surjective morphisms on its mirror. To state this conjecture formally, we first give some definitions.

\begin{definition} \label{def:degeneration} A \emph{degeneration} is a proper, flat, surjective morphism $\calX \to \Delta$ from a K\"{a}hler manifold to the complex unit disc, which has connected fibres and is smooth over the punctured disc $\Delta^*$.  Denote a general fibre by $X$ and denote the fibre over $0 \in \Delta$ by $X_0$. A degeneration $\calX \to \Delta$ is called \emph{semistable} if $X_0 \subset \calX$ is a simple normal crossings divisor.
\end{definition}

Given a semistable degeneration, the logarithm of monodromy induces a morphism on the limiting mixed Hodge structure
\[\nu \colon H^{*}_{\lim}(X, \bQ) \longrightarrow H^{*}_{\lim}(X,\bQ)(-1)\]
and this action is nilpotent with $\nu^{\dim(X) + 1} = 0$ (see \cite[\S 11.3]{mhs}).

\begin{definition} \label{def:length}
The \emph{length} of a semistable degeneration $\calX \to \Delta$ is the smallest $\lambda \in \bN$ such that $\nu^{\lambda} = 0$.
\end{definition}

 We will primarily be interested in degenerations of Calabi-Yau varieties, i.e. degenerations where the general fibre $X$ is a nonsingular complex variety with $\omega_X \cong \calO_X$ and $h^i(X,\calO_X) = 0$ for all $0 < i < \dim_{\bC}(X)$.

\begin{definition} \label{def:Ktrivial} A degeneration $\calX \to \Delta$ of Calabi-Yau varieties is called \emph{$K$-trivial} if $\omega_{\calX} \cong \calO_{\calX}$.
\end{definition}

Note that $K$-trivial semistable degenerations of K3 surfaces of lengths $1$, $2$, and $3$ correspond precisely to degenerations of Types I, II, and III, respectively (see \cite[Section 4(d)]{csesa}). A special class of $K$-trivial semistable degenerations are the \emph{Tyurin degenerations}.

\begin{definition} \label{def:tyurin} A smooth variety $V$ is called \emph{quasi-Fano} if its anticanonical linear system contains a smooth Calabi-Yau member and $H^k(V,\calO_V)=0$ for all $k > 0$. Given this, a \emph{Tyurin degeneration} is a $K$-trivial semistable degeneration of Calabi-Yau varieties $\calX \to \Delta$ whose central fibre $X_0$ is a union of two quasi-Fano varieties $V_1 \cup_S V_2$ meeting normally along a smooth member of the anticanonical linear system $S \in |-K_{V_i}|$ in each. 
\end{definition}

Note that Tyurin degenerations have length $2$. With this in place we can state our conjecture precisely.

\begin{conjecture} \label{con:mirrorfiltrations} Suppose $\calX \to \Delta$ is a $K$-trivial semistable degeneration of Calabi-Yau varieties of length $\lambda$. Let $Y$ be a Calabi-Yau variety mirror to a general fibre $X$. Then there is a projective, surjective morphism $\pi\colon Y \to B$ with $\dim_{\bC}(B) = \lambda-1$ and we have an equality between the dimensions of the graded pieces
\[\dim(\Gr_F^p\Gr^W_q\Gr^P_lH^k(Y)) = \dim(\Gr_F^{n-p}\Gr^W_{n+l-2p}\Gr^P_{n+q-2p}H^{n+k-2p}_{\lim}(X)),\]
where $n = \dim_{\bC}(X)$ and $F^{\bullet}$, $W_{\bullet}$, and $P_{\bullet}$ denote the Hodge, weight, and perverse Leray filtrations respectively.

Moreover, for an appropriate choice of embedding of $B$ into projective space and with $U \subset Y$ the complement of the preimage of a general linear subspace, we have 
\[\dim(\Gr_F^p\Gr^W_q\Gr^P_lH^k_c(U)) = \dim(\Gr_F^{n-p}\Gr^W_{n+l-2p}\Gr^P_{n+q-2p+1}H^{n+k-2p}(\calX)).\]
\end{conjecture}

As evidence for this conjecture we will prove two results. The first focusses on the K3 surface case.

\begin{theorem} \label{thm:K3evidence} \begin{enumerate}
\item If $\calX \to \Delta$ is a Type II degeneration of lattice polarized K3 surfaces with general fibre $X$, then there is a mirror $Y$ to $X$ (in the sense of \cite{mslpk3s}) admitting a genus $1$ fibration $\pi\colon Y \to \bP^1$ for which Conjecture \ref{con:mirrorfiltrations} holds.
\item If $\calX \to \Delta$ is a Type III degeneration of K3 surfaces polarized by the lattice $H \oplus E_8 \oplus E_8 \oplus \langle -2k \rangle$ and $Y$ is a generic K3 surface of degree $2k$, then there is a finite morphism $\pi\colon Y \to B$ for which Conjecture \ref{con:mirrorfiltrations} holds.
\item Let $\calX \to \Delta$ be a Type II or III degeneration of K3 surfaces with general fibre $X$. Suppose that the mirror $Y$ to $X$ admits a morphism $\pi\colon Y \to B \subset \bP^N$ for which Conjecture \ref{con:mirrorfiltrations} holds. Then Conjecture \ref{con:mirrorfiltrations} also holds between the $\mu$-fold base change of $\calX$ and the degree $\mu$ Veronese re-embedding  $Y \to B \subset \bP^N \hookrightarrow \bP^M$.
\end{enumerate}
\end{theorem}

Our final result focusses on the case of Calabi-Yau threefolds. In this setting we have the following conjecture from \cite[Section 2.3]{mstdfcym}.

\begin{conjecture}\label{con:dht} Let $\calX \to \Delta$ be a Tyurin degeneration of Calabi-Yau threefolds with general fibre $X$ and central fibre $X_0 = V_1 \cup_S V_2$. Then there is a mirror $Y$ to $X$ which admits a K3-fibration $\pi\colon Y \to \bP^1$. Moreover, the general K3 fibre $Z$ of $\pi$, with lattice polarization given by the monodromy invariant classes in $H^2(Z,\bZ)$, is mirror (in the sense of \cite{mslpk3s}) to $S$, with lattice polarization induced by restriction of classes from $H^2(V_1)$ and $H^2(V_2)$.
\end{conjecture}

Our final result shows that this conjecture implies Conjecture \ref{con:mirrorfiltrations} in the Tyurin degeneration setting. We also verify Conjecture \ref{con:mirrorfiltrations} for an explicit degeneration of length $3$.

\begin{theorem} \label{thm:CY3evidence} \begin{enumerate}
\item If $\calX \to \Delta$ is a Tyurin degeneration of Calabi-Yau threefolds and $\pi\colon Y \to \bP^1$ is a K3-fibration on its mirror which satisfies Conjecture \ref{con:dht}, then Conjecture \ref{con:mirrorfiltrations} holds for $\calX \to \Delta$  and $\pi\colon Y \to \bP^1$.
\item There is a degeneration of length $3$ of the quintic threefold and an elliptic fibration on its mirror for which Conjecture \ref{con:mirrorfiltrations} holds.
\end{enumerate}
\end{theorem} 

\subsection{Relationship to other work} 

The first to study the question of finding a mirror to the monodromy weight filtration was probably Gross \cite{slfit}. Following the SYZ approach to mirror symmetry he conjectured that, in dimensions $2$ and $3$, the monodromy weight filtration around a boundary divisor in the complex moduli space of a Calabi-Yau manifold $X$ should coincide with the Leray filtration associated to a special Lagrangian torus fibration on $X$ and, moreover, that this should be mirror to the Leray filtration associated to the dual special Lagrangian torus fibration on the SYZ mirror $\check{X}$. However, he also notes \cite[Remark 4.7]{slfit} that there are difficulties in formulating this in higher dimensions, as it is unclear whether the fibrations involved have the Hard Lefschetz property. Gross and Siebert overcame these issues in \cite{mslddii}, using an approach that bypasses the Leray filtration and instead works directly with the cup product, to describe a mirror (in the sense of the Gross-Siebert program) to the monodromy weight filtration in any dimension. 

Harder, Katzarkov, and Przyjalkowski \cite{pwp} suggest a different approach. They conjecture that the weight filtration on the cohomology of a log Calabi-Yau variety $U$ is instead mirror to a \emph{perverse} Leray filtration on the cohomology of the (homological) mirror log Calabi-Yau variety $\check{U}$. This approach solves the Hard Lefschetz problem encountered by Gross: unlike for the Leray filtration, the Hard Lefschetz Theorem holds for the perverse Leray filtration associated to any projective morphism from a nonsingular variety \cite[Theorem 2.1.4]{htam}. 

This is related to the SYZ picture studied by Gross as follows. In the case where $U$ is the complement of a smooth anticanonical divisor in a nonsingular projective variety, one expects $U$ to admit a special Lagrangian torus fibration. SYZ mirror symmetry then postulates that the mirror $\check{U}$ should admit a dual special Lagrangian torus fibration. The conjecture of \cite{pwp} can be thought of as relating the weight filtration on the cohomology of $U$ to the perverse Leray filtration associated to this dual special Lagrangian torus fibration on $\check{U}$.

This conjecture is called the ``mirror $P=W$'' conjecture in \cite{pwp}. We now discuss how it is related to the ``classical'' $P=W$ conjecture, due to de Cataldo, Hausel, and Migliorini \cite{thshtcva1}. Briefly, if $C$ is a smooth complex projective curve, $\calM^H_{n,d}(C)$ is the moduli space of rank $n$ degree $d$ semistable Higgs bundles on $C$, and $\calM^B_{n,d}(C)$ is the Betti moduli space of $d$-twisted $n$-dimensional representations of $\pi_1(C)$, then $\calM^H_{n,d}(C)$ and $\calM^B_{n,d}(C)$ are diffeomorphic quasiprojective varieties that are not deformation equivalent. The cohomology of $\calM^B_{n,d}(C)$ admits a mixed Hodge structure with weight filtration $W_{\bullet}$, whilst $\calM^H_{n,d}(C)$ admits a Hitchin map $\calM^H_{n,d}(C) \to \bC^k$, where $\dim_{\bC}\calM^H_{n,d}(C) = 2k$, which induces a perverse Leray filtration $P_{\bullet}$ on its cohomology. The classical $P=W$ conjecture then states that
\[W_{2i}H^n(\calM^B_{n,d}(C)) = W_{2i+1}H^n(\calM^B_{n,d}(C)) = P_iH^n(\calM^H_{n,d}(C)).\]
This conjecture has recently been proved independently in \cite{pwh2} and \cite{pwcgln}.

To see how the classical and mirror $P=W$ conjectures are related, note that the moduli space $\calM^B_{n,d}(C)$ is expected to be log Calabi-Yau \cite{dbcsl2cvps} and admits a special Lagrangian torus fibration induced by the Hitchin fibration. The $P=W$ conjecture then relates the weight filtration on $H^n(\calM^B_{n,d}(C))$ to the perverse Leray filtration on $H^n(\calM^H_{n,d}(C))$ associated to this special Lagrangian torus fibration. The parallel between the two conjectures is clear; for more detail see \cite{pwp}.

Our Conjecture \ref{con:mirrorfiltrations} can be thought of as a version of the mirror $P=W$ conjecture for compact Calabi-Yau varieties, rather than the log Calabi-Yau varieties studied in \cite{pwp}. However, with the exception of elliptic fibrations on K3 surfaces, the fibrations we study here are generally \emph{not} the special Lagrangian torus fibrations which appear in the work of Gross and in the classical $P=W$ conjecture.

\subsection{Structure of the paper} Section \ref{sec:perverse} consists largely of background material on the perverse Leray filtration on cohomology. After setting up notation, we prove that this filtration is respected by two localization exact sequences (Proposition \ref{prop:perverselocalization}). We then briefly review some results of de Cataldo and Migliorini \cite{pflht} (Theorem \ref{thm:geometricleray}), which give a geometric interpretation of the perverse Leray filtration, before using these to prove a series of useful vanishing lemmas for the graded pieces. We conclude the section with a result from \cite{htam} (Proposition \ref{prop:cupperverse}), which shows that the perverse Leray filtration on the cohomology of $Y$ coincides with the one induced by cup-product with the preimage of a general linear section.

Section \ref{sec:mirrorCS} contains the proof of Theorem \ref{thm:mainthm}. Here we construct the mirror Clemens-Schmid sequence and prove that it is an exact sequence of mixed Hodge structures on the graded pieces of the perverse Leray filtration.

Section \ref{sec:mirror} presents the relationship with mirror symmetry. We begin with a review of the Clemens-Schmid sequence and the results of \cite{htdicdt}, which show that it is also an exact sequence of mixed Hodge structures on the graded pieces of the perverse Leray filtration. Comparing this to the mirror Clemens-Schmid sequence, noting that mirror symmetry is expected to exchange Hodge numbers and exchange the weight and perverse Leray filtrations, gives rise to Conjecture \ref{con:mirrorfiltrations}.

In Section \ref{sec:K3} we prove Theorem \ref{thm:K3evidence} and in Section \ref{sec:CY3} we prove Theorem \ref{thm:CY3evidence}. In each case the proof proceeds by explicit computation of the filtrations involved.

\subsection{Acknowledgments} The authors had many helpful conversations during the development of this paper; we would particularly like to thank Andrew Harder, Sukjoo Lee, James Lewis, Matt Kerr, and Helge Ruddat for taking the time to speak with us. 

\subsection{Data availability} Data sharing not applicable to this article as no datasets were generated or analysed during the current study.

\subsection{Notation and assumptions}\label{sec:notation}

The following notation will be used throughout this paper.
\begin{itemize}
\item $Y$ is a smooth complex projective variety of complex dimension $n$.
\item $B$ is a complex projective variety of complex dimension $m$. We fix an embedding $B \subset \bP^N$. Note that we do not require $B$ to be smooth.
\item $\pi \colon Y \to B$ is a projective, surjective morphism. Note that, whilst we often consider the case where $\pi$ is a fibration, in general we do not require $\pi$ to be flat.
\item $C \subset B$ is a general linear subspace, given by intersecting $B$ with a general hyperplane in $\bP^N$.
\item $Z := \pi^{-1}(C) \subset Y$. As the preimage of a general linear subspace, $Z$ is smooth by Bertini's Theorem (although not necessarily connected!).
\item $U := Y - Z$ is the preimage of the affine variety $B - C$ under $\pi$.
\item The restrictions of $\pi$ to $Z$ and $U$ are denoted by $\pi^Z$ and $\pi^U$, respectively.
\end{itemize}

%$E_2$ degeneration for the perverse Leray sequence still holds when $B$ is not smooth, as long as $Y$ is. This holds as long as $\underline{\bQ}_Y[n]$ is semisimple, which is true if $Y$ is smooth, and $\pi$ is projective; see \cite[Section 3]{htdicdt}.

\section{The perverse Leray filtration}\label{sec:perverse}

\subsection{The perverse Leray filtration and localization}

We begin with a discussion of the perverse Leray filtration; for full details of this construction we refer the reader to \cite[Chapters 13 and 14]{mhs}.

Consider a morphism $\pi\colon Y \to B$ as in Section \ref{sec:notation}. The \emph{perverse Leray spectral sequence} associated to $\pi\colon Y \to B$ is the spectral sequence with $E_2$-term
\[E_2^{p,q} := \bH^p(B,\prescript{\mathfrak{p}}{}{\calH}^q(R\pi_*\underline{\bQ}_Y)) \Rightarrow H^{p+q}(Y,\bQ),\]
where $\prescript{\mathfrak{p}}{}{\calH}^q$ denotes the perverse cohomology functor. By \cite[Corollary 14.13]{mhs}, this is in fact a spectral sequence of mixed Hodge structures, which is compatible with the standard mixed Hodge structure on $H^{p+q}(Y,\bQ)$. 

Unlike the standard Leray spectral sequence, the perverse Leray spectral sequence has the wonderful property that it degenerates at the $E_2$-term for any projective morphism \cite[Corollary 14.41]{mhs} (see also \cite[Section 3]{htdicdt} for the same statement under much weaker assumptions). This gives a decomposition
\[H^k(Y,\bQ) = \bigoplus_{p+q=k} E_2^{p,q}.\]
Using this, we give the following standard definition.

\begin{definition} \label{def:perverseleray} The \emph{perverse Leray filtration} $P_{\bullet}$ on $H^*(Y,\bQ)$ is the increasing filtration with graded pieces
\[\Gr_l^PH^k(Y,\bQ) := E_2^{k-l,l} = \bH^{k-l}(B,\prescript{\mathfrak{p}}{}{\calH}^l(R\pi_*\underline{\bQ}_Y)).\]
\end{definition}

\begin{remark} The statements above also hold for the morphisms $\pi^Z$ and $\pi^U$, allowing us to define perverse Leray filtrations on $H^*(Z,\bQ)$ and $H^*(U,\bQ)$ in the obvious way. Moreover, an analogous construction for the compactly supported cohomology $H^*_c(U,\bQ)$ can be obtained by replacing $R\pi_*$ in the definitions above with the compactly supported direct image $R\pi_!^U$.
\end{remark}

\begin{remark} Since middle perversity is preserved by duality when working over a field, it follows that the perverse Leray filtrations for $H^*(U,\bQ)$ and $H^*_c(U,\bQ)$ are exchanged by Poincar\'{e}-Verdier duality, whilst those for $H^*(Y,\bQ)$ and $H^*(Z,\bQ)$ are preserved.
\end{remark}

Now we state the main result of this section.

\begin{proposition}\label{prop:perverselocalization} The localization long exact sequences
\[\cdots\longrightarrow H^k(Y,\bQ) \longrightarrow H^k(U,\bQ) \longrightarrow H^{k-1}(Z,\bQ)[-1](-1) \longrightarrow H^{k+1}(Y,\bQ)\longrightarrow\cdots \]
and
\[ \cdots \longrightarrow H^k_c(U,\bQ) \longrightarrow H^k(Y,\bQ) \longrightarrow H^k(Z,\bQ)[-1] \longrightarrow H^{k+1}_c(U,\bQ) \longrightarrow \cdots,\]
induce exact sequences of mixed Hodge structures on the graded pieces of the perverse Leray filtration. Here $[-1]$ denotes a shift in the perverse Leray filtration and $(-1)$ denotes the usual Tate twist on the mixed Hodge structure.
\end{proposition}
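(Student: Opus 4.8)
The plan is to realize both localization sequences as the hypercohomology long exact sequences of distinguished triangles on the base $B$, and to produce the graded pieces by a splitting argument for perverse sheaves on $B$ that uses the genericity of $C$.

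First I would transport everything to $B$. Writing $\tilde\imath\colon Z\hookrightarrow Y$, $\tilde\imath_B\colon C\hookrightarrow B$ for the closed embeddings and $j\colon U\hookrightarrow Y$, $j_B\colon B-C\hookrightarrow B$ for the open ones, I apply $R\pi_*$ to the two standard localization triangles on $Y$, namely $\tilde\imath_*\tilde\imath^!\underline{\bQ}_Y\to\underline{\bQ}_Y\to Rj_*\underline{\bQ}_U\xrightarrow{+1}$ and $j_!\underline{\bQ}_U\to\underline{\bQ}_Y\to\tilde\imath_*\underline{\bQ}_Z\xrightarrow{+1}$. Using proper base change (valid since $\pi$ is proper) together with the purity isomorphism $\tilde\imath^!\underline{\bQ}_Y\cong\underline{\bQ}_Z[-2](-1)$ (valid since $Z$ is a smooth divisor in the smooth variety $Y$), these become distinguished triangles on $B$,
\[\tilde\imath_{B*}R\pi^Z_*\underline{\bQ}_Z[-2](-1)\to R\pi_*\underline{\bQ}_Y\to Rj_{B*}R\pi^U_*\underline{\bQ}_U\xrightarrow{+1},\]
\[Rj_{B!}R\pi^U_!\underline{\bQ}_U\to R\pi_*\underline{\bQ}_Y\to\tilde\imath_{B*}R\pi^Z_*\underline{\bQ}_Z\xrightarrow{+1},\]
which lift to $D^b\mathrm{MHM}(B)$ and whose hypercohomology long exact sequences are the two localization sequences. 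Because $B-C$ is affine, $j_B$ is an affine open immersion, so by Artin vanishing $Rj_{B*}$ is right $t$-exact and $Rj_{B!}$ left $t$-exact for the middle perversity; being adjoint to the $t$-exact functor $j_B^*$, both are in fact $t$-exact, and so is $\tilde\imath_{B*}$. Hence all three commute with $\prescript{\mathfrak{p}}{}\tau_{\leq\bullet}$ and $\prescript{\mathfrak{p}}{}{\calH}^\bullet$, which identifies the perverse Leray filtrations on $H^*(U)$, $H^*_c(U)$ and $H^*(Z)$ from the Remark with those obtained by applying Definition~\ref{def:perverseleray} to the complexes $Rj_{B*}R\pi^U_*\underline{\bQ}_U$, $Rj_{B!}R\pi^U_!\underline{\bQ}_U$, $\tilde\imath_{B*}R\pi^Z_*\underline{\bQ}_Z$ on $B$; in particular each of these complexes is still a direct sum of shifted perverse sheaves.

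The heart of the argument is to show that the long exact sequence of perverse cohomology sheaves of each triangle splits into short exact sequences. For the first triangle, $\prescript{\mathfrak{p}}{}{\calH}^l(R\pi_*\underline{\bQ}_Y)\to\prescript{\mathfrak{p}}{}{\calH}^l(Rj_{B*}R\pi^U_*\underline{\bQ}_U)$ is, by the $t$-exactness above, the adjunction unit $\prescript{\mathfrak{p}}{}{\calH}^l(R\pi_*\underline{\bQ}_Y)\to Rj_{B*}j_B^*\,\prescript{\mathfrak{p}}{}{\calH}^l(R\pi_*\underline{\bQ}_Y)$, whose kernel is the largest subobject of $\prescript{\mathfrak{p}}{}{\calH}^l(R\pi_*\underline{\bQ}_Y)$ supported on $C$. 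By the decomposition theorem this perverse sheaf is a direct sum of intersection complexes on the (finitely many, fixed) closures of strata of $\pi$, and since $C$ is a general hyperplane section it contains none of them; hence there is no such subobject and the unit is injective. Therefore every map $\prescript{\mathfrak{p}}{}{\calH}^l(\tilde\imath_{B*}R\pi^Z_*\underline{\bQ}_Z[-2](-1))\to\prescript{\mathfrak{p}}{}{\calH}^l(R\pi_*\underline{\bQ}_Y)$ in the perverse cohomology sequence vanishes, and the sequence breaks into short exact sequences
\[0\to\prescript{\mathfrak{p}}{}{\calH}^l(R\pi_*\underline{\bQ}_Y)\to Rj_{B*}\,\prescript{\mathfrak{p}}{}{\calH}^l(R\pi^U_*\underline{\bQ}_U)\to\tilde\imath_{B*}\,\prescript{\mathfrak{p}}{}{\calH}^{l-1}(R\pi^Z_*\underline{\bQ}_Z)(-1)\to 0\]
in $\mathrm{MHM}(B)$; note that the $Z$-term sits in perverse degree $l-1$, because of the shift $[-2]$ in the triangle together with the degree shift of the connecting morphism — this is the source of the ``$[-1]$'' in the statement. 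Applying $\bH^{k-l}(B,-)$ and using the $E_2$-degeneration identifications $\bH^{k-l}(B,\prescript{\mathfrak{p}}{}{\calH}^l(R\pi_*\underline{\bQ}_Y))=\Gr^P_lH^k(Y)$, $\bH^{k-l}(B,Rj_{B*}\,\prescript{\mathfrak{p}}{}{\calH}^l(R\pi^U_*\underline{\bQ}_U))=\Gr^P_lH^k(U)$ and $\bH^{k-l}(B,\tilde\imath_{B*}\,\prescript{\mathfrak{p}}{}{\calH}^{l-1}(R\pi^Z_*\underline{\bQ}_Z)(-1))=\Gr^P_{l-1}H^{k-1}(Z)(-1)$, together with \cite[Corollary~14.13]{mhs} for the mixed Hodge structures, yields the asserted exact sequence. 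The second triangle is treated identically — its perverse cohomology sequence splits because $\prescript{\mathfrak{p}}{}{\calH}^l(R\pi_*\underline{\bQ}_Y)$ also has no quotient supported on $C$ — or is deduced from the first by Verdier duality, which exchanges $Rj_{B*}$ with $Rj_{B!}$.

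I expect the main difficulty to lie in the crux step: pinning down the kernels and cokernels of these perverse-cohomology maps and killing them using the decomposition theorem and the genericity of $C$. The remaining ingredients — base change, purity, the $t$-exactness facts, the $E_2$-degeneration bookkeeping, the lift to mixed Hodge modules, and checking that the connecting maps of the graded sequences are the ones induced by the Gysin and boundary maps of the localization sequences (which follows from naturality of the construction) — should be routine.
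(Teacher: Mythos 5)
Your proof is correct and takes essentially the same route as the paper: both arguments reduce the statement to a short exact sequence of perverse sheaves on $B$, using the $t$-exactness of the functors attached to the affine open immersion $j_B\colon B-C\hookrightarrow B$ and the general linear section $C$, together with base change, and then recover the graded sequences by taking hypercohomology. The one difference is that where the paper simply cites \cite[Lemma 5.3.1]{pflht} for the splitting of the perverse cohomology long exact sequence, you reprove that splitting via the decomposition theorem (no summand of $\prescript{\mathfrak{p}}{}{\calH}^l(R\pi_*\underline{\bQ}_Y)$ is supported on the general section $C$, hence no sub- or quotient object is) --- a valid, if slightly heavier, substitute for de Cataldo--Migliorini's transversality argument.
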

\begin{proof}
We first note that it suffices to prove that the sequences are exact on the graded pieces of $P_{\bullet}$: the statement about mixed Hodge structures follows immediately from the fact  that the mixed Hodge structures on the graded pieces are induced from the standard ones on cohomology \cite[Corollary 14.13]{mhs}, combined with the well-known fact that the localization sequences are exact sequences of mixed Hodge structures.

To prove exactness on graded pieces of $P_{\bullet}$ for the first sequence, we begin by noting that $\prescript{\mathfrak{p}}{}\calH^l(R\pi_*\underline{\bQ}_Y)$ is a perverse sheaf on $B$. Thus, as $B$ is projective and $C$ is a general linear section, by \cite[Lemma 5.3.1]{pflht} we have a natural localization exact sequence
\[0 \longrightarrow \prescript{\mathfrak{p}}{}\calH^l(R\pi_*\underline{\bQ}_Y) \longrightarrow Rj_*j^*\prescript{\mathfrak{p}}{}\calH^l(R\pi_*\underline{\bQ}_Y) \longrightarrow Ri_!i^!\prescript{\mathfrak{p}}{}\calH^l(R\pi_*\underline{\bQ}_Y)[1] \longrightarrow 0,\]
where $i\colon C \hookrightarrow B$ (resp. $j\colon B- C \hookrightarrow B$) is the natural open (resp. closed) embedding.

The functors $j^*$, $Rj_*$, $Ri_!$, $i^!$ satisfy the following exactness properties with respect to the middle perversity $t$-structure:
\begin{itemize}
\item $j^* = j^!$ is $t$-exact as $j$ is an open immersion;
\item as $j$ is an affine embedding, $Rj_*$ is $t$-exact \cite[Section 3.7]{htam};
\item $Ri_! = Ri_*$ is $t$-exact as $i$ is a closed immersion;
\item $i^![1]=i^*[-1]$ is $t$-exact as $i$ is a normally nonsingular inclusion of complex varieties of complex codimension $1$ \cite[Lemma 3.5.4]{htam}.
\end{itemize}
In particular, these functors commute with the perverse cohomology functor, so we have an exact sequence
\[0 \longrightarrow \prescript{\mathfrak{p}}{}\calH^l(R\pi_*\underline{\bQ}_Y) \longrightarrow Rj_*\prescript{\mathfrak{p}}{}\calH^l(j^*R\pi_*\underline{\bQ}_Y) \longrightarrow Ri_!\prescript{\mathfrak{p}}{}\calH^l(i^!R\pi_*\underline{\bQ}_Y[1]) \longrightarrow 0.\]
By a simple base-change argument, using the properties of the four functors listed above, we may rewrite this sequence as
\[0 \longrightarrow \prescript{\mathfrak{p}}{}\calH^l(R\pi_*\underline{\bQ}_Y) \longrightarrow Rj_*\prescript{\mathfrak{p}}{}\calH^l(R\pi^U_*\underline{\bQ}_U) \longrightarrow Ri_*\prescript{\mathfrak{p}}{}\calH^{l}(R\pi_*^Z\underline{\bQ}_Z[-1]) \longrightarrow 0.\]
Finally, taking hypercohomology of this sequence on $B$ gives the required sequence.

The proof for exactness of the second sequence follows by applying an analogous argument to the second localization sequence from \cite[Lemma 5.3.1]{pflht}, or by applying Poincar\'{e}-Verdier duality.\end{proof}

\subsection{A geometric description of the perverse Leray filtration}\label{sec:geometricfiltrations}

The aim of this section is to describe how the results of \cite{pflht} may be used to give a geometric description of the perverse Leray filtration.

We begin by setting up some more notation. There is a smooth projective variety $F(N,m)$ parametrizing $m$-flags $\calF = \{\Lambda_{-1} \subset \cdots \subset \Lambda_{-m}\}$ on $\bP^N$, where $\Lambda_{-p} \subset \bP^N$ is a codimension $p$ linear subspace. A linear $m$-flag $\calF$ on $\bP^N$ is \emph{general} if it belongs to a suitable Zariski open subset of $F(N,m)$ and, similarly, a pair of flags is \emph{general} if the same is true of the pair with respect to $F(N,m) \times F(N,m)$; for precise details of these open sets, we refer the reader to \cite[Section 5.2]{pflht}.

Fix a general pair of linear $m$-flags on $\bP^N$. Intersecting with $B$, we obtain a pair of $m$-flags $B_{\bullet}$ and $C_{\bullet}$ on $B$; i.e. a pair of increasing sequences of subvarieties of $B$:
\begin{align*}
B &= B_0 \supset B_{-1} \supset \cdots \supset B_{-m} \supset B_{-m-1} = \emptyset,\\
B &= C_0 \supset C_{-1} \supset \cdots \supset C_{-m} \supset C_{-m-1} = \emptyset.
\end{align*}
Setting $Y_p = \pi^{-1}B_p$ and $Z_p = \pi^{-1}C_p$, we obtain a pair of increasing sequences of subvarieties of $Y$:
\begin{align*}
Y &= Y_0 \supset Y_{-1} \supset \cdots \supset Y_{-m} \supset Y_{-m-1} = \emptyset,\\
Y &= Z_0 \supset Z_{-1} \supset \cdots \supset Z_{-m} \supset Z_{-m-1} = \emptyset.
\end{align*}
We assume, without loss of generality, that $C = C_{-1}$ and $Z = Z_{-1}$; consequently we have $U = Y - Z_{-1}$.

Let $f \colon X \to Y$ be a locally closed embedding. Then we have the restriction functor, denoted $(-)|_X$%= Rf_!f^*$ on $D_Y$
, which is exact. 
If $f$ is closed, then we also have the right derived functor of sections with support in $X$, denoted $R\Gamma_X(-)$.% = Rf_*f^!$.

%Following \cite[Section 3.6]{pflht}, we now define the \emph{flag} and \emph{$\delta$-filtrations} on cohomology. We begin with the flag filtrations on the cohomology of $U$.
%
%\begin{definition}
%The \emph{flag filtration} $G^{\bullet}$ on the cohomology of $U$ is the decreasing filtration defined on regular cohomology by
%\[G^{p}H^k(U,\bQ) := \ker\left\{\bH^k(U,\underline{\bQ}) \longrightarrow \bH^k(U,\underline{\bQ}|_{U\cap Y_{p-1}})\right\}\]
%and on compactly supported cohomology by
%\[G^pH_c^k(U,\bQ) := \im\left\{\bH^k_c(U,R\Gamma_{U\cap Y_{-p}}\underline{\bQ}) \longrightarrow \bH_c^k(U,\underline{\bQ})\right\}.\]
%\end{definition}
%
%Next we define the $\delta$-filtration on the cohomology of $Y$.
%
%\begin{definition}
%The \emph{$\delta$-filtration} $\delta^{\bullet}$ is the decreasing filtration defined on the cohomology of $Y$ by
%\[\delta^pH^k(Y,\bQ) := \im\left\{\bigoplus_{i+j=p} \bH^k(Y,R\Gamma_{Y_{-i}}(\underline{\bQ}|_{Y-Z_{j-1}})) \longrightarrow \bH^k(Y,\underline{\bQ})\right\}.\]
%%and on the cohomology of $Z$ by
%%\[\delta^pH^k(Z,\bQ) := \im\left\{\bigoplus_{i+j=p} \bH^k(Z,R\Gamma_{Z \cap Y_{-i}}(\bQ_{Z-Z_{j-2}})) \longrightarrow \bH^k(Z,\bQ)\right\}\]
%\end{definition}

The following theorem, due to de Cataldo and Migliorini, gives a geometric interpretation of the perverse Leray filtration.

\begin{theorem}\label{thm:geometricleray}\cite[Theorems 4.1.3 and 4.2.1]{pflht}
The perverse Leray filtrations $P_{\bullet}$ on the cohomology of $U$ and $Y$ 
%coincide with the shifted flag and $\delta$-filtrations as follows:
may be computed from the flags $Y_{\bullet}$ and $Z_{\bullet}$ as follows:
\begin{align*}
P_lH^k(U,\bQ) &= %G^{k-l}H^k(U,\bQ) = 
\ker\left\{\bH^k(U,\underline{\bQ}) \longrightarrow \bH^k(U, \underline{\bQ}|_{U\cap Y_{k-l-1}})\right\},\\
P_lH^k_c(U,\bQ) &= %G^{k-l}H_c^k(U,\underline{\bQ}) = 
\im\left\{\bH^k_c(U,R\Gamma_{U \cap Y_{l-k}}\underline{\bQ}) \longrightarrow \bH_c^k(U,\underline{\bQ})\right\},\\
 P_lH^k(Y,\bQ) &= %\delta^{k-l}H^k(Y,\bQ) \\ &= 
 \im\left\{\bigoplus_{i+j=k-l} \bH^k(Y,R\Gamma_{Y_{-i}}(\underline{\bQ}|_{Y-Z_{j-1}})) \to \bH^k(Y,\underline{\bQ})\right\},\\
 %P_lH^k(Z,\bQ) &= \delta^{k-l}H^k(Z,\bQ) \\
% &= \im\left\{\bigoplus_{i+j=k-l} \bH^k(Z,R\Gamma_{Z \cap Y_{-i}}(\bQ_{Z-Z_{j-2}})) \longrightarrow \bH^k(Z,\bQ)\right\}.
 \end{align*}
\end{theorem}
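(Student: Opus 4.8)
The result is \cite[Theorems 4.1.3 and 4.2.1]{pflht}, so the plan is to recall the strategy behind it and to indicate what must be checked so that it applies in our setting, where $B$ is permitted to be singular. The starting observation is that the perverse Leray filtration $P_\bullet$ on $\bH^*(B,K)$ for $K = R\pi_*\underline{\bQ}_Y$ is, by definition, the one whose graded pieces are $\bH^{\bullet - l}(B,\prescript{\mathfrak{p}}{}\calH^l(K))$; equivalently, by $E_2$-degeneration, it is the image filtration induced by the tower of perverse truncations $\prescript{\mathfrak{p}}{}\tau_{\leq l}K$. The content of the theorem is that this rather abstract operation can be replaced by the concrete one of restricting to, or taking cohomology with supports along, a general linear section of $B \subset \bP^N$ of the correct codimension. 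Accordingly, I would first prove everything for an arbitrary $K \in \calD_B$ in place of $R\pi_*\underline{\bQ}_Y$ — working with $\bH^*(B,K)$ and with the two hypercohomologies of its restriction to the complement of a general linear section — and only at the end specialize to $K = R\pi_*\underline{\bQ}_Y$, at which point proper base change identifies these groups with $H^*(Y)$, $H^*(U)$, $H^*_c(U)$ exactly as in the proof of Proposition \ref{prop:perverselocalization}.

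The key technical input — and the source of the general flags — is the behaviour of perverse sheaves under a general hyperplane section. If $B' = B \cap H$ for a general hyperplane $H$, with $i\colon B' \hookrightarrow B$ the closed and $j\colon B - B' \hookrightarrow B$ the complementary (affine) open immersion, then $Rj_*$ and $j_!$ are $t$-exact because $j$ is an affine embedding, while $i^*[-1]$ and $i^![1]$ are $t$-exact because $B'$ is a normally nonsingular linear section — these are exactly the $t$-exactness properties listed and used in the proof of Proposition \ref{prop:perverselocalization}, and they remain valid for singular $B$ precisely because we only ever cut by general linear sections of the ambient $\bP^N$. Feeding a perverse sheaf $P$ through the two localization triangles and using these exactnesses collapses them into short exact sequences of perverse sheaves
\[0 \longrightarrow P \longrightarrow Rj_*j^*P \longrightarrow i_*\!\left(i^!P[1]\right) \longrightarrow 0, \qquad 0 \longrightarrow i_*\!\left(i^*P[-1]\right) \longrightarrow j_!j^*P \longrightarrow P \longrightarrow 0.\]
Iterating along the flag $C_\bullet$ (for the statements about $U$), or along the pair of flags $B_\bullet, C_\bullet$ (for $Y$), and comparing the resulting filtered objects with the perverse truncation tower, produces the claimed identifications; each hyperplane section shifts perversity by one, which is the origin of the shift in ``$P_l = G^{k-l}$'' relating the perverse degree $l$ to the flag index $k-l$.

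For the two statements about $U$ this is essentially the whole argument: one iterates the first (respectively second) short exact sequence for $K$ itself along the flag $U \cap Y_\bullet$ and reads off $P_lH^k(U) = \ker\{\bH^k(U,\underline{\bQ}) \to \bH^k(U,\underline{\bQ}|_{U \cap Y_{k-l-1}})\}$ together with its Poincar\'e-dual counterpart for $H^k_c(U)$; the mixed Hodge structure statements then come for free, since every map in sight is a morphism of mixed Hodge structures. The genuinely delicate case is $H^k(Y)$: as $Y$ is proper one cannot detect the filtration by restricting to a single open affine, and de Cataldo and Migliorini instead work with \emph{two} transverse flags at once — informally, one flag (via $C_\bullet$) to carve out an open affine piece on which the flag filtration is available, and a second flag (via $B_\bullet$) to supply the cohomology-with-supports direction — which is exactly why the $\delta$-filtration involves the double sum $\bigoplus_{i+j=k-l}$ over both flags.

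I expect the bookkeeping in this last case to be the main obstacle. One must choose the two flags in sufficiently general position relative to one another and to $B$, verify that at every stage of the resulting double induction the functors $i^*[-1]$, $i^![1]$, $Rj_*$, $j_!$ remain $t$-exact, and check that the associated spectral sequences degenerate so that the total filtration really is the perverse one. None of this requires smoothness of $B$, so once it has been carried out for a general constructible $K$ the theorem follows by specializing $K = R\pi_*\underline{\bQ}_Y$ and applying proper base change.
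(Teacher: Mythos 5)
The first thing to note is that the paper does not prove this statement at all: Theorem \ref{thm:geometricleray} is imported verbatim from \cite[Theorems 4.1.3 and 4.2.1]{pflht} (modulo the sign change in the indexing recorded in the following remark), so there is no in-paper proof to compare against. Your proposal is therefore really an attempted reconstruction of de Cataldo and Migliorini's argument, and judged on those terms it correctly identifies several genuine ingredients: the reduction to an arbitrary constructible complex $K \in \calD_B$ followed by proper base change to recover $H^*(Y)$, $H^*(U)$, $H^*_c(U)$; the $t$-exactness of $Rj_*$, $j_!$ for general affine open complements and of $i^*[-1]$, $i^![1]$ for general (normally nonsingular) linear sections, valid without smoothness of $B$; and the need for two transverse flags in the projective case, which is where the double sum in the $\delta$-filtration comes from.

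However, as a proof sketch it has a genuine gap: the engine of the de Cataldo--Migliorini argument is the Artin--Grothendieck vanishing theorem --- for a perverse sheaf $P$ on an affine variety $V$ one has $\bH^k(V,P)=0$ for $k>0$ and $\bH^k_c(V,P)=0$ for $k<0$ --- and this never appears in your outline. The $t$-exactness properties you list only let you commute perverse truncations past the localization functors; they do not, by themselves, identify the flag filtration with the perverse filtration. The point where you say one ``compares the resulting filtered objects with the perverse truncation tower'' is precisely where the real work happens: after cutting by $p$ general hyperplanes one lands on an affine variety of dimension $m-p$, and it is the iterated application of Artin vanishing that forces the hypercohomology of the successive perverse-graded pieces to be concentrated in the predicted single degree, which is what makes the flag spectral sequence degenerate and agree with the perverse one. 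Without invoking this vanishing, the ``shift by one per hyperplane section'' you describe is an observation about indexing, not an argument. A secondary, smaller point: the theorem as stated is an equality of filtrations on rational cohomology and makes no assertion about mixed Hodge structures, so the remark that ``the mixed Hodge structure statements then come for free'' is addressing something that is not part of this statement (compatibility with MHS is handled elsewhere in the paper, via \cite[Corollary 14.13]{mhs}).
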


\begin{remark} Note that the formulas in Theorem \ref{thm:geometricleray} differ from those in \cite{pflht} by a sign; this is because we follow the convention that the perverse Leray filtration is increasing, but \cite{pflht} define it to be decreasing.
\end{remark}

\begin{remark} \label{rem:Zfiltration} One may modify Theorem \ref{thm:geometricleray} in the obvious way to compute the perverse Leray filtrations on the cohomology of the subvarieties $Z_{-r}$. In this case the filtration is induced by the pair of increasing sequences of subvarieties
\begin{align*}
Z_{-r} &= (Y_0 \cap Z_{-r}) \supset (Y_{-1} \cap Z_{-r}) \supset \cdots \supset (Y_{r-m-1} \cap Z_{-r}) = \emptyset,\\
Z_{-r} &\supset \cdots \supset Z_{-m} \supset Z_{-m-1} = \emptyset
\end{align*}
given by intersection of $Y_{\bullet}$ and $Z_{\bullet}$ with $Z_{-r}$. Note that there is a shift in indices in the $Z_{\bullet}$-filtration. In the case $r = 1$ this provides a geometric explanation for the shift in the perverse Leray filtration on $H^k(Z,\bQ)$ in the sequences of Proposition \ref{prop:perverselocalization}.
\end{remark}

\subsection{Isomorphisms and vanishing}

In this section we prove some useful lemmas about the graded pieces of the perverse Leray filtration.

\begin{lemma}\label{lem:Uvanishing}
Recall that $m := \dim_{\bC}(B)$. We have the following vanishing for the graded pieces of the perverse Leray filtration on $U$:
\begin{itemize}
\item $\Gr_l^PH^k(U,\bQ) = 0$ whenever $l < k$ or $l > k + m$,
\item $\Gr_l^PH^k_c(U,\bQ) = 0$ whenever $l < k - m$ or $l > k$.
\end{itemize}
\end{lemma}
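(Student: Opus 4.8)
The natural approach is to exploit the geometric description of the perverse Leray filtration from Theorem \ref{thm:geometricleray}, reducing each vanishing statement to a statement about hypercohomology of the constant sheaf restricted to certain open or closed subvarieties. For the first bullet, Theorem \ref{thm:geometricleray} gives
\[P_lH^k(U,\bQ) = \ker\left\{\bH^k(U,\underline{\bQ}) \longrightarrow \bH^k(U,\underline{\bQ}|_{U\cap Y_{k-l-1}})\right\}.\]
First I would observe that when $l < k$, the index $k-l-1 \geq 0$, so $Y_{k-l-1}$ is either all of $Y$ (when $l=k$, giving $k-l-1=-1$, which needs care) or $Y_0 = Y$ itself; more precisely, when $k - l - 1 \geq 0$ we have $U \cap Y_{k-l-1} \supseteq U \cap Y_0 = U$ only in the degenerate case, so the cleanest argument is: the flag filtration $G^\bullet$ satisfies $G^0 H^k(U,\bQ) = H^k(U,\bQ)$ and $G^p H^k(U,\bQ) = 0$ for $p$ sufficiently large. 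Concretely, $G^p$ is the kernel of restriction to $U \cap Y_{p-1}$; when $p = 0$ the target involves $Y_{-1}$ which is a proper subvariety, and when $p \leq 0$... — here I need to be careful about conventions. Let me instead argue directly from the increasing-filtration normalization: $P_l H^k(U,\bQ)$ is an increasing filtration that must satisfy $P_l = 0$ for $l \ll 0$ and $P_l = H^k(U,\bQ)$ for $l \gg 0$, so the content is pinning down exactly where the jumps stop.

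The heart of the matter is dimension counting. For the top vanishing $\Gr_l^P H^k(U,\bQ) = 0$ when $l > k+m$: by Theorem \ref{thm:geometricleray} it suffices to show $P_{k+m} H^k(U,\bQ) = H^k(U,\bQ)$, i.e. that restriction $\bH^k(U,\underline{\bQ}) \to \bH^k(U, \underline{\bQ}|_{U \cap Y_{-m-1}})$ has the whole group as kernel; but $Y_{-m-1} = \pi^{-1}(B_{-m-1}) = \pi^{-1}(\emptyset) = \emptyset$, so the target vanishes and the kernel is everything. For the bottom vanishing $\Gr_l^P H^k(U,\bQ) = 0$ when $l < k$, I would show $P_{k-1}H^k(U,\bQ) = 0$: here the target is $\bH^k(U, \underline{\bQ}|_{U \cap Y_0}) = \bH^k(U,\underline{\bQ})$ since $Y_0 = Y$ and $U \cap Y = U$, so the restriction map is the identity and its kernel is $0$. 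The compactly-supported statements follow by an identical argument using the dual formula from Theorem \ref{thm:geometricleray}, with $P_l H^k_c(U,\bQ) = \im\{\bH^k_c(U, R\Gamma_{U \cap Y_{l-k}}\underline{\bQ}) \to \bH^k_c(U,\underline{\bQ})\}$: when $l > k$ the support locus $Y_{l-k}$ has negative index hence... no — when $l - k > 0$, $Y_{l-k}$ is not defined by the flag (the flag only goes down from $Y_0$), so $R\Gamma_{U \cap Y_{l-k}}$ should be interpreted as $R\Gamma_U$, giving the identity map and surjectivity, so $P_k H^k_c = H^k_c$; and when $l < k-m$, $Y_{l-k} \subseteq Y_{-m-1} = \emptyset$, so the source is $0$ and the image is $0$. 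Alternatively, the compactly-supported case follows from the regular case by Poincaré–Verdier duality, which exchanges the two filtrations and swaps $H^k(U)$ with $H^{2n-k}_c(U)$, translating the range $k \leq l \leq k+m$ into the dual range.

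\textbf{Main obstacle.} The only genuine subtlety is bookkeeping around the boundary conventions: making sure that $Y_p$ for $p > 0$ or $p < -m-1$ is consistently interpreted (either as $Y$ or as $\emptyset$ respectively, matching the conventions under which Theorem \ref{thm:geometricleray} was stated), and checking that the index shifts between the flag filtration $G^\bullet$, the $\delta$-filtration, and the perverse Leray filtration $P_\bullet$ are tracked correctly — in particular the $k-l-1$ versus $l-k$ appearing in the two halves of Theorem \ref{thm:geometricleray}. I would double-check these against the normalization that $P_\bullet$ is exhaustive and separated, and against the known degree bounds (e.g. $\bH^k(U,\underline{\bQ}) = 0$ for $k > 2\dim U$ and the fact that $\prescript{\mathfrak{p}}{}\calH^l(R\pi^U_*\underline{\bQ}_U) = 0$ outside a range of length $m$ determined by the fibre dimension), which provide an independent sanity check on the claimed vanishing ranges. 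No deep input beyond Theorem \ref{thm:geometricleray} is needed; this is essentially an exercise in unwinding that theorem's formulas at the extreme values of $l$.
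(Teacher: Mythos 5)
Your proposal is correct and follows essentially the same route as the paper: both unwind the explicit formulas of Theorem \ref{thm:geometricleray} at the extreme values of $l$, using $Y_{k-l-1}=Y$ for $l<k$ (so the restriction map is injective and the kernel vanishes) and $Y_{k-l-1}=\emptyset$ for $l>k+m$ (so the target vanishes and $P_l$ is everything), with the compactly supported case handled by the dual formula or Poincar\'{e}--Verdier duality. The only difference is cosmetic: you pin down the single filtrands $P_{k-1}$ and $P_{k+m}$ and invoke monotonicity of the filtration, whereas the paper computes $P_l$ and $P_{l-1}$ separately for each $l$ in the stated ranges.
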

\begin{proof}
We begin by proving the statements for $U$. Note that if $l < k$, we have $k-l-1 \geq 0$ and so $Y_{k-l-1} = Y$. Thus $\bH^k(U, \underline{\bQ}|_{U\cap Y_{k-l-1}}) \cong \bH^k(U,\underline{\bQ})$ and, by Theorem \ref{thm:geometricleray}, we obtain $P_lH^k(U,\bQ) = 0$. It follows that $\Gr_l^PH^m(U,\bQ) = P_lH^k(U,\bQ)/P_{l-1}H^k(U,\bQ) = 0$ for all $l < k$.

Similarly, if $l > k+m$ we have $k-l-1 < -m - 1$ and so $Y_{k-l} = Y_{k-l-1} = \emptyset$. Thus by Theorem \ref{thm:geometricleray} we have
\[P_lH^k(U,\bQ) = P_{l-1}H^k(U,\bQ) = \ker\left\{\bH^k(U,\underline{\bQ}) \longrightarrow 0\right\} = H^k(U,\bQ).\] 
It follows that $\Gr_l^PH^m(U,\bQ) = P_lH^k(U,\bQ)/P_{l-1}H^k(U,\bQ) = 0$.

The proofs for compactly supported cohomology follow by an analogous argument, or by applying Poincar\'{e}-Verdier duality to the result above.
\end{proof}

Using this, we prove a useful lemma relating the perverse Leray filtration on $Y$ to the perverse Leray filtrations on the subvarieties $Z_{-r}$.

\begin{lemma}\label{lem:perversesubvarieties} Let $i_r\colon Z_{-r} \hookrightarrow Y$ denote the inclusion.
\begin{enumerate}
\item If $l < k$, the morphism $(i_r)_!$ of mixed Hodge structures is an isomorphism for all $0\leq r < k-l$
\[\Gr^P_{l-r}H^{k-2r}(Z_{-r},\bQ) \cong \Gr^P_lH^k(Y,\bQ)(r) \]
and is surjective if $r = k-l$ 
\[\Gr^P_{2l-k}H^{2l-k}(Z_{l-k},\bQ) \twoheadrightarrow \Gr^P_lH^k(Y,\bQ)(k-l),\]
where $(-)$ denotes the usual Tate twist.
\item If $l > k$, the morphism $i_r^*$ of mixed Hodge structures is an isomorphism for all $0 \leq r < l-k$ 
\[\Gr^P_lH^k(Y,\bQ) \cong \Gr^P_{l-r}H^k(Z_{-r},\bQ)\]
and is injective if $r = l-k$
\[\Gr^P_lH^k(Y,\bQ) \hookrightarrow \Gr^P_{k}H^k(Z_{k-l},\bQ).\]
\end{enumerate}
\end{lemma}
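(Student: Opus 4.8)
The plan is to iterate the localization sequences of Proposition \ref{prop:perverselocalization}, passing to graded pieces of $P_\bullet$, and to feed in the vanishing of Lemma \ref{lem:Uvanishing} to kill the ``open'' terms. The key observation is that each $Z_{-r}$ sits inside $Z_{-(r-1)}$ as the preimage of a general hyperplane section, so the pair of fibrations $(Z_{-(r-1)} \supset Z_{-r}, \ Z_{-(r-1)} - Z_{-r})$ satisfies exactly the hypotheses under which Proposition \ref{prop:perverselocalization} was proved (with $Z_{-(r-1)}$ playing the role of the ambient $Y$, and using the filtration description of Remark \ref{rem:Zfiltration}). Concretely, for part (1) I would run the \emph{second} localization sequence at each stage: on graded pieces it reads
\[
\Gr^P_{\bullet}H^{\bullet}_c(Z_{-(r-1)}-Z_{-r}) \to \Gr^P_{\bullet}H^{\bullet}(Z_{-(r-1)}) \xrightarrow{(i)_!} \Gr^P_{\bullet-1}H^{\bullet}(Z_{-r})(-1) \to \cdots,
\]
with the indexing bookkeeping from Remark \ref{rem:Zfiltration} and the Tate twist recorded exactly as in Proposition \ref{prop:perverselocalization}. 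For part (2) I would instead run the \emph{first} localization sequence, whose graded-piece form gives the restriction maps $i^*$ with the dual index shifts.

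Now the inductive engine. For part (2): assume $l > k$ and $0 \le r < l-k$. The open complement $Z_{-r'} - Z_{-(r'+1)}$ is an open subvariety of $Z_{-r'}$ fibred over an affine base of dimension $m - r'$, so $\Gr^P_a H^b$ of its ordinary cohomology vanishes unless $b \le a$, by Lemma \ref{lem:Uvanishing} applied to this fibration. In the relevant range of indices $(a,b) = (l-r', k)$ we have $a = l - r' > k = b$ (since $r' < l-k$), so both the incoming and outgoing open terms in the first localization sequence vanish, and the restriction map $\Gr^P_{l-r'}H^k(Z_{-r'}) \to \Gr^P_{l-r'-1}H^k(Z_{-(r'+1)})$ is an isomorphism. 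Composing these for $r' = 0, 1, \dots, r-1$ gives the claimed isomorphism $\Gr^P_lH^k(Y) \cong \Gr^P_{l-r}H^k(Z_{-r})$. When $r = l-k$ the last step has $a = l - (r-1) = k+1 > k$ on one side but $a = l - r = k$ on the other, so only the \emph{incoming} open term is guaranteed to vanish; exactness then forces the restriction map to be merely injective, giving the final assertion of (2). Part (1) is completely dual: for $0 \le r < k-l$ one checks that in the second localization sequence the compactly-supported open terms $\Gr^P_a H^b_c$ occur with $a < b - m'$ (here using $\Gr^P_a H^b_c = 0$ unless $a \ge b - m'$), hence vanish, making $(i)_!$ an isomorphism on graded pieces; when $r = k-l$ one open term survives and one gets only surjectivity. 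The degree shift $k \mapsto k - 2r$ and the Tate twist $(r)$ accumulate one factor $(1)$ (and one drop of $2$ in cohomological degree, from the $[-1](-1)$ shift combined with the perverse shift of Remark \ref{rem:Zfiltration}) per iteration.

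The routine-but-delicate part, and the one I'd be most careful about, is the index bookkeeping: tracking how the perverse-filtration shift ``$[-1]$'', the Tate twist ``$(-1)$'', the cohomological degree, and the shift-in-indices of the $Z_\bullet$-filtration from Remark \ref{rem:Zfiltration} all compose under iteration, and verifying that the inequalities defining the vanishing ranges in Lemma \ref{lem:Uvanishing} line up precisely at the boundary case $r = k-l$ (resp. $r = l-k$) to yield ``surjective'' (resp. ``injective'') rather than ``isomorphism''. A secondary point to check is that each $Z_{-r}$ really is smooth and the relevant sections are sufficiently general — this is exactly the Bertini/general-flag setup of Section \ref{sec:geometricfiltrations}, so it is available, but one must make sure the ``general pair of flags'' chosen there is generic enough that every stage $Z_{-r'} \supset Z_{-(r'+1)}$ simultaneously satisfies the hypotheses of Proposition \ref{prop:perverselocalization}. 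Granting that, the proof is a clean induction on $r$.
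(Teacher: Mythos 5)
Your overall strategy---inducting along the flag $Y \supset Z_{-1} \supset Z_{-2} \supset \cdots$, applying Proposition \ref{prop:perverselocalization} at each stage with $Z_{-(r-1)}$ as the ambient variety (via Remark \ref{rem:Zfiltration}), and killing the open terms with Lemma \ref{lem:Uvanishing}---is exactly the paper's. But you have attached the two maps to the wrong sequences, and this breaks the argument. The Gysin map $(i)_!$ is the connecting homomorphism $H^{k-2}(Z)[-1](-1) \to H^{k}(Y)$ of the \emph{first} localization sequence, while the restriction $i^*\colon H^k(Y) \to H^k(Z)[-1]$ sits in the \emph{second} (compactly supported) sequence. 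Your displayed sequence for part (1) is of the shape of the second sequence but carries an arrow labelled $(i)_!$ pointing \emph{from} the ambient variety \emph{to} the closed subvariety, with no degree shift and a spurious Tate twist: that map is $i^*$, and it cannot produce the degree-raising map $\Gr^P_{l-r}H^{k-2r}(Z_{-r}) \to \Gr^P_l H^k(Y)(r)$ that part (1) asserts is an isomorphism.

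As a consequence, the vanishing you invoke is the wrong one and is false in the range where you need it. For part (2) you note that the relevant open terms have perverse index $a = l - r' > k = b$ and conclude they vanish; but your own criterion (the first bullet of Lemma \ref{lem:Uvanishing}) kills $\Gr^P_a H^b$ of the open stratum only when $a < b$, so in the regime $a > b$ these terms are generically nonzero (e.g.\ $\Gr_3^P H^2(U) \neq 0$ for an elliptically fibred K3 surface, by duality with the nonzero entry $\Gr_1^P H^2_c(U)$ in Subsection \ref{sec:K3fibex}). The argument that actually works is the paper's: for $l < k$ (part (1)) use the \emph{first} sequence, where the Gysin map is flanked by $\Gr_l^P H^{k-1}(U)$ and $\Gr_l^P H^{k}(U)$, both of which vanish for $l < k-1$ (only the latter for $l = k-1$, whence surjectivity); for $l > k$ (part (2)) use the \emph{second} sequence, where $i^*$ is flanked by $\Gr_l^P H^{k}_c(U)$ and $\Gr_l^P H^{k+1}_c(U)$, both of which vanish for $l > k+1$ (only the former for $l = k+1$, whence injectivity). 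Your bookkeeping of the Tate twists, the shift from Remark \ref{rem:Zfiltration}, and the boundary-case degeneration to surjectivity/injectivity would then go through as you describe.
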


\begin{proof}
To prove statement (1), assume first that $l < k-1$. By Lemma \ref{lem:Uvanishing} we have $\Gr_l^PH^k(U,\bQ) = \Gr_l^PH^{k-1}(U,\bQ) = 0$. Applying the first exact sequence of Proposition \ref{prop:perverselocalization}, we obtain an isomorphism 
\[(i_{1})_!\colon \Gr_{l-1}^PH^{k-2}(Z,\bQ) \stackrel{\sim}{\longrightarrow} \Gr_{l}^PH^{k}(Y,\bQ)(1).\] 
Replacing $Y$ by $Z_{-i}$ and $Z$ by $Z_{-i-1}$, we may repeat this argument inductively $k-l-1$ times; the resulting composition gives the required isomorphisms
\[\Gr^P_{l-r}H^{k-2r}(Z_{-r},\bQ) \cong \Gr^P_lH^k(Y,\bQ)(r)\]
for all $0 \leq r < l-k$. 

Now if $l = k-1$, by Lemma \ref{lem:Uvanishing} again we have $\Gr_{k-1}^PH^k(U,\bQ) = 0$, and the first exact sequence of Proposition \ref{prop:perverselocalization} gives a surjective map 
\[(i_{1})_!\colon\Gr_{k-2}^PH^{k-2}(Z,\bQ) \twoheadrightarrow \Gr_{k-1}^PH^{k}(Y,\bQ)(1).\] 
Replacing $Y$ by $Z_{l-k+1}$ and $Z$ by $Z_{l-k}$ and composing with the isomorphism above gives the required surjective map.

Statement (2) is proved similarly, using the vanishing of $\Gr_l^PH^k_c(U,\bQ)$ for $l > k$ from Lemma \ref{lem:Uvanishing} and the second exact sequence from Proposition \ref{prop:perverselocalization}.
\end{proof}

Now we prove a vanishing result for the graded pieces of the perverse Leray filtration on $Y$.

\begin{lemma}\label{lem:Yvanishing}
Recall that $m := \dim_{\bC}(B)$. Then we have $\Gr_l^PH^k(Y,\bQ) = 0$ whenever $l < \max\{\frac{k}{2},\, k - m\}$ or $l > k+m$.
\end{lemma}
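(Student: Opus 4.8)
The plan is to reduce everything to Lemma \ref{lem:perversesubvarieties}, which compares $\Gr^P_l H^k(Y,\bQ)$ with the perverse Leray filtration on the cohomology of the flag members $Z_{-r}$, together with two elementary facts: cohomology in negative degree vanishes, and $Z_{-r}=\emptyset$ once $r>m$.

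For the range $l<\max\{\tfrac{k}{2},\,k-m\}$: since $\max\{\tfrac{k}{2},\,k-m\}\le k$, the hypothesis forces $l<k$, so Lemma \ref{lem:perversesubvarieties}(1) applies, and taking $r=k-l\ge 1$ (the boundary value) produces a surjection of mixed Hodge structures
\[\Gr^P_{2l-k}H^{2l-k}(Z_{l-k},\bQ)\twoheadrightarrow \Gr^P_l H^k(Y,\bQ)(k-l).\]
It therefore suffices to show that the source vanishes, and I would observe that this is guaranteed by exactly one of the two defining inequalities: if $l<\tfrac{k}{2}$ then $2l-k<0$ and $H^{2l-k}(Z_{l-k},\bQ)=0$ for degree reasons; if instead $l<k-m$ then $k-l>m$, so $Z_{l-k}=Z_{-(k-l)}=\emptyset$ by the definition of the flag $Z_\bullet$ and $H^{2l-k}(Z_{l-k},\bQ)=0$ for that reason. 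Hence $\Gr^P_l H^k(Y,\bQ)=0$. The one clean point here is that the two halves of the bound $\max\{\tfrac{k}{2},\,k-m\}$ correspond precisely to the two mechanisms forcing $H^{2l-k}(Z_{l-k},\bQ)$ to vanish.

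For the range $l>k+m$: now $l>k$, so Lemma \ref{lem:perversesubvarieties}(2) applies, and taking $r=l-k>m$ gives an injection $\Gr^P_l H^k(Y,\bQ)\hookrightarrow \Gr^P_k H^k(Z_{k-l},\bQ)$; since $l-k>m$ we have $Z_{k-l}=Z_{-(l-k)}=\emptyset$, so the target is zero. (As a cross-check, both of these ranges also follow directly from Definition \ref{def:perverseleray}, which gives $\Gr^P_l H^k(Y,\bQ)=\bH^{k-l}(B,\prescript{\mathfrak{p}}{}\calH^l(R\pi_*\underline{\bQ}_Y))$: here $\prescript{\mathfrak{p}}{}\calH^l(R\pi_*\underline{\bQ}_Y)$ is a perverse sheaf on the $m$-dimensional variety $B$, and its hypercohomology is concentrated in degrees $[-m,m]$, since the pushforward of a perverse sheaf along the proper map $B\to\mathrm{pt}$, whose fibre has dimension $m$, lies in $\prescript{\mathfrak{p}}{}\calD^{[-m,m]}_{\mathrm{pt}}$.) I do not anticipate a genuine obstacle; the only point needing a moment's care is that Lemma \ref{lem:perversesubvarieties} is here invoked with $Z_{-r}$ possibly empty, so one should check that its statement — and the iterated-localization argument behind it — survives that degeneration, which it does because the relevant localization sequences become trivial.
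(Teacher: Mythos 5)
Your proof is correct and follows essentially the same route as the paper: both ranges are reduced to Lemma \ref{lem:perversesubvarieties}, using the surjection from $\Gr^P_{2l-k}H^{2l-k}(Z_{l-k},\bQ)$ for $l<k$ (which vanishes either for degree reasons or because $Z_{l-k}=\emptyset$) and the injection into $\Gr^P_{k}H^{k}(Z_{k-l},\bQ)$ for $l>k+m$ (which vanishes because $Z_{k-l}=\emptyset$). The parenthetical cross-check via the support conditions on perverse sheaves is a nice extra but not part of the paper's argument.
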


\begin{proof}
This is a straightforward consequence of Lemma \ref{lem:perversesubvarieties}. 

The surjective map from Lemma \ref{lem:perversesubvarieties}(1) implies that if $l < k$, then  $\Gr^P_lH^k(Y,\bQ)$ vanishes whenever $\Gr^P_{2l-k}H^{2l-k}(Z_{l-k},\bQ)$ does. This occurs if $l < \frac{k}{2}$, as $Z_{l-k}$ has no cohomology in negative degrees, and if  $l < k-m$, as in this case $Z_{l-k} = \emptyset$.

The injective map of Lemma \ref{lem:perversesubvarieties}(2) implies that if $l > k$, then $\Gr^P_lH^k(Y,\bQ)$ vanishes whenever $\Gr^P_{k}H^k(Z_{k-l},\bQ)$ does. This occurs if $l > k + m$, as in this case $Z_{k-l} = \emptyset$.
\end{proof}

Finally, the following lemma is helpful when computing perverse Leray filtrations.

\begin{lemma}\label{lem:lerayisomorphisms}
Recall that $n := \dim_{\bC}(Y)$. There are isomorphisms of mixed Hodge structures
\[
\begin{tikzcd}
\Gr_l^PH^k(Y,\bQ) \ar[d,equal] \ar[r,"\sim"]& \Gr_{2n-l}^PH^{2n+k-2l}(Y,\bQ)(n-l)\ar[d,equal]\\
\Gr^P_{2n-l}H^{2n-k}(Y,\bQ)(n-k) &   \Gr_l^PH^{2l-k}(Y,\bQ)(l-k) \ar[l,"\sim"']
\end{tikzcd}\]
where $(-)$ denotes the usual Tate twist. Similar statements hold for $U$, if the top row is replaced by compactly supported cohomology.
\end{lemma}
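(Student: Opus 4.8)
The plan is to deduce everything from the two symmetries already established: Poincaré–Verdier duality on $U$ (which exchanges the perverse Leray filtrations on $H^*(U)$ and $H^*_c(U)$, noted in the second Remark of Section \ref{sec:perverse}) and Poincaré duality on the smooth projective variety $Y$. The four corners of the diagram are related by the obvious degree and index substitutions, so it suffices to establish any one edge; I would establish the top horizontal arrow and then observe that the remaining arrows are formal consequences, obtained by the substitutions $k \mapsto 2n - k$, $l \mapsto 2n - l$, etc., which are visibly involutive and carry the top arrow to each of the others.

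First I would recall hard Lefschetz for the perverse filtration, or rather the Poincaré duality pairing: since $Y$ is smooth projective of complex dimension $n$, cup product gives a perfect pairing $H^k(Y,\bQ) \otimes H^{2n-k}(Y,\bQ) \to \bQ(-n)$. The key input is that this pairing is compatible with the perverse Leray filtration in the appropriate sense, namely $P_l H^k(Y,\bQ)$ is the annihilator of $P_{2n-l-1}H^{2n-k}(Y,\bQ)$ (equivalently, the pairing descends to a perfect pairing $\Gr_l^P H^k(Y,\bQ) \otimes \Gr_{2n-l}^P H^{2n-k}(Y,\bQ) \to \bQ(-n)$). This is a standard fact: the perverse Leray filtration is self-dual under Poincaré–Verdier duality because $R\pi_*\underline{\bQ}_Y[n]$ is a self-dual object (up to the appropriate twist) and middle perversity is preserved by duality, exactly as invoked in the second Remark of the section. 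Concretely, $\Gr_l^P H^k(Y) = \bH^{k-l}(B, \prescript{\mathfrak{p}}{}\calH^l(R\pi_*\underline{\bQ}_Y))$, and Verdier duality sends $\prescript{\mathfrak{p}}{}\calH^l(R\pi_*\underline{\bQ}_Y)$ (as a perverse sheaf on $B$, after the shift by $n$ normalizing $R\pi_*\underline{\bQ}_Y$ to be perverse-centred) to the dual of $\prescript{\mathfrak{p}}{}\calH^{-l}$ of the dual complex, and using $\mathbb{D}(R\pi_*\underline{\bQ}_Y) \cong R\pi_*\underline{\bQ}_Y[2n](n)$ one matches the pieces. This immediately yields the isomorphism
\[\Gr_l^P H^k(Y,\bQ) \xrightarrow{\ \sim\ } \Gr_{2n-l}^P H^{2n-k}(Y,\bQ)^{\vee}(-n),\]
and since all these pieces are pure (being graded pieces of a mixed Hodge structure sitting inside the pure-ish decomposition) with the dual of a weight-$w$ Hodge structure of weight $-w$, one can rewrite the dual as $\Gr^P H^{2n-k}$ with a Tate twist, landing on the stated form after the reindexing $H^{2n-k} \leadsto H^{2n+k-2l}$ that comes from tracking which cohomological degree the piece $\prescript{\mathfrak{p}}{}\calH^l$ contributes to. I would spell this bookkeeping out carefully, since the degree shift $2n + k - 2l$ and the twist $(n-l)$ are exactly what pins down the statement.

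Second, for the remaining two corners: the left vertical equality and the bottom arrow are obtained purely formally. The left vertical identification $\Gr_l^P H^k(Y) = \Gr_{2n-l}^P H^{2n-k}(Y)(n-k)$ — wait, reading the diagram, the left entry is written as $\Gr^P_{2n-l}H^{2n-k}(Y,\bQ)(n-k)$ and is asserted \emph{equal} (not merely isomorphic) to $\Gr_l^P H^k(Y,\bQ)$; this is just the previous isomorphism reindexed, or more precisely it is the same statement read with $(k,l) \mapsto (2n-k, 2n-l)$, which is an involution, so the two horizontal $\sim$ arrows are the same map and its inverse, and the verticals record the resulting identifications of source and target. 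So really only one isomorphism needs proof; the diagram commutes because composing the top arrow, the right equality, the bottom arrow, and the left equality is the identity, which follows from the duality pairing being symmetric (up to sign) — i.e. applying the construction twice returns the original pairing. For the "similar statements for $U$" I would replace Poincaré duality on $Y$ by Poincaré–Verdier duality between $U$ and $U$ with compact supports: the pairing $H^k_c(U) \otimes H^{2n-k}(U) \to \bQ(-n)$ is again perfect, again compatible with the perverse filtrations (which are exchanged, per the Remark), and the identical reindexing argument applies, now relating $\Gr^P H^*_c(U)$ to $\Gr^P H^*(U)$ across the four corners.

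The main obstacle, I expect, is not any deep geometry but the precise normalization and sign bookkeeping: one must fix whether $R\pi_*\underline{\bQ}_Y$ or $R\pi_*\underline{\bQ}_Y[n]$ (or $[\dim B]$, or $[n]$ with $n = \dim Y$) is taken as the perverse-centred object, track how this interacts with the indexing convention for $\prescript{\mathfrak{p}}{}\calH^l$ and with the "increasing vs decreasing" convention flagged in the earlier Remark, and verify that the Tate twists $(n-l)$, $(n-k)$, $(l-k)$ come out exactly as displayed rather than off by the dimension of $B$ or by a shift. I would handle this by checking the formula in the trivial case $B = \mathrm{pt}$ (where $P_\bullet$ is trivial and the four corners reduce to ordinary Poincaré duality $H^k(Y) \cong H^{2n-k}(Y)^\vee(-n)$, forcing $l = k$ and making all four maps the classical duality isomorphism) and in the case $\pi = \mathrm{id}$, $Y = B$ (where $\prescript{\mathfrak{p}}{}\calH^l$ is concentrated so that $P_l H^k = H^k$ for $l \geq$ something), and then invoking the general duality statement for the perverse Leray filtration — which can be cited from \cite{mhs} (the same Section 14 material already used) or from \cite{htdicdt} — rather than reproving it.
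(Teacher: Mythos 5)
There is a genuine gap: you establish only the vertical equalities of the diagram and then try to obtain the horizontal isomorphisms from them by ``reindexing,'' but the two are logically independent. Poincar\'e--Verdier duality (the self-duality of the perverse Leray filtration that you correctly describe) gives exactly $\Gr_l^PH^k(Y,\bQ) \cong \Gr_{2n-l}^PH^{2n-k}(Y,\bQ)^{\vee}(-n) \cong \Gr_{2n-l}^PH^{2n-k}(Y,\bQ)(n-k)$, which is the \emph{left vertical} identification. The top horizontal arrow lands in $\Gr_{2n-l}^PH^{2n+k-2l}(Y,\bQ)(n-l)$, a genuinely different graded piece: by Definition \ref{def:perverseleray} it is $\bH^{k-l}(B,\prescript{\mathfrak{p}}{}\calH^{2n-l}(R\pi_*\underline{\bQ}_Y))$, whereas the duality statement produces $\bH^{l-k}(B,\prescript{\mathfrak{p}}{}\calH^{2n-l}(R\pi_*\underline{\bQ}_Y))$; these coincide only when $l=k$. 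No bookkeeping with the substitution $(k,l)\mapsto(2n-k,2n-l)$ bridges this, since duality is an involution and composing it with itself returns the identity --- it cannot generate the horizontal maps. The missing input is the Hard Lefschetz Theorem of de Cataldo--Migliorini \cite[Theorem 2.1.4]{htam}, which is precisely what the paper invokes for the horizontal arrows: either the relative Hard Lefschetz isomorphism $\prescript{\mathfrak{p}}{}\calH^{n-r}(R\pi_*\underline{\bQ}_Y)\cong\prescript{\mathfrak{p}}{}\calH^{n+r}(R\pi_*\underline{\bQ}_Y)$ fed into Definition \ref{def:perverseleray}, or equivalently Proposition \ref{prop:cupproduct} (which identifies the top-left corner with the bottom-right corner via $[Z_{l-k}]\cup-$) combined with the vertical duality. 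This is cup product with an ample class --- a different symmetry from the duality pairing, and not a formal consequence of it.

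Your own sanity check exposes the issue: when $B$ is a point, the horizontal isomorphism reads $H^k(Y,\bQ)\cong H^{2n-k}(Y,\bQ)(n-k)$, which is the classical Hard Lefschetz Theorem, not the ``classical duality isomorphism'' $H^k(Y,\bQ)\cong H^{2n-k}(Y,\bQ)^{\vee}(-n)$ as you assert. The paper's proof is two lines --- the verticals are Poincar\'e--Verdier duality, the horizontals are Hard Lefschetz --- and your proposal correctly supplies the first half but not the second.
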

\begin{proof} The vertical isomorphisms are just Poincar\'{e}-Verdier duality. The horizontal isomorphisms follow from an application of the Hard Lefschetz Theorem \cite[Theorem 2.1.4]{htam} to the definition of the perverse Leray filtration.
\end{proof}

\subsection{The cup-product filtration and the perverse Leray filtration}\label{sec:cupproduct}

We conclude this section by describing the relationship between the perverse Leray filtration and a second filtration, called the \emph{cup-product filtration}. As we shall see, the cup product filtration will give us an easy way to compute the perverse Leray filtration.

With notation as above, let $[Z] \in H^2(Y,\bQ)$ (resp. $[Z_{-r}] \in H^{2r}(Y,\bQ)$) denote the class of $Z$ (resp. $Z_{-r}$). Recall that $n := \dim_{\bC}(Y)$ and $m := \dim_{\bC}(B)$.

\begin{definition}
The cup product $[Z]\cup -$ defines a nilpotent endomorphism with index $(m+1)$ on the total cohomology $H^*(Y,\bQ)$. By \cite[Lemma-Definition 11.9]{mhs}, it therefore defines an increasing weight filtration on $H^*(Y,\bQ)$, centred at $n$. We call this filtration the \emph{cup-product filtration} and denote it by $K_{\bullet}$.
\end{definition}

Recall that the cup-product with the class of $Z$ may be expressed as the composition $[Z]\cup - = i^*i_!$, where $i\colon Z \hookrightarrow Y$ is the inclusion. Using Proposition \ref{prop:perverselocalization}, we see that this induces a map on the graded pieces of the perverse Leray filtration as follows.

\[
\begin{tikzcd}\Gr_l^PH^k(Y,\bQ) \ar[rr,"{[Z]\cup -}"] \ar[rd,"{i^*}"]&& \Gr_l^PH^{k+2}(Y,\bQ)(1)\\
& \Gr_{l-1}^PH^k(Z,\bQ) \ar[ur,"{i_!}"] &
\end{tikzcd}\]

Iterating this map $r$-times and noting that the $r$-fold cup-product of $[Z]$ with itself is $[Z_{-r}]$, we obtain a morphism of mixed Hodge structures
\[[Z_{-r}] \cup - : \Gr_l^PH^k(Y,\bQ) \longrightarrow \Gr_l^PH^{k+2r}(Y,\bQ)(r).\]
%Setting $k = l-r$, we obtain as a  special case
%\[[Z_{-r}] \cup - : \Gr_l^PH^{l-r}(Y,\bQ) \longrightarrow \Gr_l^PH^{l+r}(Y,\bQ)(r).\]

In the case $k = l-r$, the Hard Lefschetz Theorem for Perverse Cohomology Groups \cite[Theorem 2.1.4]{htam} tells us this this is an isomorphism.

\begin{proposition}\cite[Theorem 2.1.4]{htam} \label{prop:cupproduct} The cup-product map
\[[Z_{-r}] \cup - \colon \Gr_l^PH^{l-r}(Y,\bQ) \longrightarrow \Gr_l^PH^{l+r}(Y,\bQ)(r)\]
is an isomorphism of mixed Hodge structures for all $0 < r \leq m:= \dim_{\bC}(B)$ and all $l \geq r$.
\end{proposition}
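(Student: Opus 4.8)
This statement is \cite[Theorem 2.1.4]{htam}; I will sketch the underlying argument. The plan is to transport the cup-product operator on $H^*(Y,\bQ)$ to an operation on the hypercohomology of a single perverse sheaf on $B$, where it becomes an instance of the Hard Lefschetz theorem for perverse sheaves. First I would pin down the operator: since $C$ is cut out on $B$ by a section of $\calO_B(1)$, the divisor $Z = \pi^{-1}(C)$ is cut out on $Y$ by the pulled-back section of $\pi^*\calO_B(1)$, so $[Z] = \pi^*[C]$ with $[C] := c_1(\calO_B(1)) \in H^2(B,\bQ)$ an ample class, and hence $[Z_{-r}] = \pi^*([C]^r)$. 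By the projection formula, cup product with $\pi^*([C]^r)$ on $H^*(Y,\bQ) = \bH^*(B, R\pi_*\underline{\bQ}_Y)$ is the $H^*(B,\bQ)$-module action of $[C]^r$. This action is induced by the morphism $R\pi_*\underline{\bQ}_Y \to (R\pi_*\underline{\bQ}_Y)[2r]$ in $\calD_B$ arising from $[C]^r \in \Hom_{\calD_B}(\underline{\bQ}_B, \underline{\bQ}_B[2r])$; as such it is compatible with the perverse truncations, hence preserves the perverse Leray filtration, and --- this is the one point requiring real care --- under the identifications $\Gr_l^P H^{l-r}(Y,\bQ) = \bH^{-r}(B, \prescript{\mathfrak{p}}{}\calH^l(R\pi_*\underline{\bQ}_Y))$ and $\Gr_l^P H^{l+r}(Y,\bQ) = \bH^{r}(B, \prescript{\mathfrak{p}}{}\calH^l(R\pi_*\underline{\bQ}_Y))$ of Definition \ref{def:perverseleray}, it induces on graded pieces precisely
\[
[C]^r \cup -\colon \bH^{-r}\bigl(B,\, \prescript{\mathfrak{p}}{}\calH^l(R\pi_*\underline{\bQ}_Y)\bigr) \longrightarrow \bH^{r}\bigl(B,\, \prescript{\mathfrak{p}}{}\calH^l(R\pi_*\underline{\bQ}_Y)\bigr).
\]

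It then remains to invoke the decomposition theorem: as $Y$ is smooth and $\pi$ is projective, each perverse cohomology sheaf $\prescript{\mathfrak{p}}{}\calH^l(R\pi_*\underline{\bQ}_Y)$ is a semisimple perverse sheaf on $B$ (underlying a pure Hodge module), so the Hard Lefschetz theorem for perverse sheaves applied to the ample class $[C]$ shows that the displayed map is an isomorphism for every $0 < r \leq m$. When $l < r$ both sides vanish --- the target by Lemma \ref{lem:Yvanishing}, since then $l < (l+r)/2$, and the source trivially --- so the hypothesis $l \geq r$ excludes only a vacuous case; bijectivity in the remaining range also follows from Lemma \ref{lem:lerayisomorphisms}. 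Finally, the perverse Leray spectral sequence is one of mixed Hodge structures by \cite[Corollary 14.13]{mhs}, and cup product with the algebraic class $[Z_{-r}]$ is a morphism of mixed Hodge structures after the Tate twist $(r)$ recorded in the statement; since a bijective morphism of mixed Hodge structures is an isomorphism, this finishes the proof.

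The step I expect to be the main obstacle is the one flagged in the first paragraph: checking that, under the identification of Definition \ref{def:perverseleray}, cup product with $[Z_{-r}]$ descends to exactly the operator $[C]^r \cup -$ on $\bH^*(B, \prescript{\mathfrak{p}}{}\calH^l(R\pi_*\underline{\bQ}_Y))$ --- i.e. that the projection-formula comparison is compatible with the perverse truncations, and not merely with the associated graded up to lower-order terms in the filtration. Once that is in place, the remaining input --- semisimplicity of the perverse cohomology sheaves and Hard Lefschetz for them --- is a direct appeal to the decomposition-theorem package of \cite{htam}.
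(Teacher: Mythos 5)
The paper offers no proof of this proposition---it is quoted directly from de Cataldo and Migliorini as \cite[Theorem 2.1.4]{htam}---so there is no in-paper argument to compare yours against; what you have written is a reconstruction of the proof behind the citation, and it is essentially the right one. The chain $[Z_{-r}]=\pi^*([C]^r)$, projection formula, compatibility with perverse truncation, decomposition theorem, and Hard Lefschetz for the semisimple perverse sheaves $\prescript{\mathfrak{p}}{}\calH^l(R\pi_*\underline{\bQ}_Y)$ on the projective base is exactly how the relative statement is established. On the point you flag as the main obstacle: the cleanest way to see that the filtration is preserved (rather than shifted, as it would be for an ample class on $Y$ that is not pulled back from $B$) is that under $\bH^k(B,K)=\Hom_{\calD_B}(\underline{\bQ}_B,K[k])$ the operator $\pi^*([C]^r)\cup-$ is \emph{pre}composition with $[C]^r\colon\underline{\bQ}_B\to\underline{\bQ}_B[2r]$, which commutes with every map obtained by \emph{post}composing with the truncation morphisms $\prescript{\mathfrak{p}}{}\tau_{\leq b}K\to K$; hence it sends $P_l$ to $P_l$ and induces precisely $[C]^r\cup-$ on $\bH^*(B,\prescript{\mathfrak{p}}{}\calH^l(K))$. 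Two minor quibbles: your parenthetical remark that bijectivity ``also follows from Lemma \ref{lem:lerayisomorphisms}'' is circular, since that lemma is itself deduced in the paper from \cite[Theorem 2.1.4]{htam} (harmless, as you only offer it as an aside); and the restriction $0<r\leq m$ is likewise only there to exclude vacuous cases, since for $r>m$ both hypercohomology groups $\bH^{\mp r}(B,\prescript{\mathfrak{p}}{}\calH^l)$ vanish for dimension reasons.
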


Using this result, we can give a particularly simple interpretation of the perverse Leray filtration on $Y$. 

\begin{proposition}\label{prop:cupperverse}
Recall that $n := \dim_{\bC}(Y)$. There are isomorphisms between the graded pieces of the cup-product filtration  and the shifted perverse Leray filtration,
\[\Gr_l^KH^k(Y,\bQ) \cong \Gr^P_{l + k - n}H^{k}(Y,\bQ).\]
\end{proposition}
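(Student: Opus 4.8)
The plan is to compare the cup-product filtration $K_\bullet$ with the perverse Leray filtration $P_\bullet$ by using that both are weight filtrations of the same nilpotent operator $N := [Z]\cup-$, and that the two candidate filtrations agree. Recall that a nilpotent endomorphism on a finite-dimensional vector space determines a \emph{unique} increasing filtration, centred at a given integer, satisfying the two defining properties of a weight filtration ($N$ shifts the filtration by $-2$, and $N^r$ induces an isomorphism $\Gr_{c+r}\xrightarrow{\sim}\Gr_{c-r}$): this is \cite[Lemma-Definition 11.9]{mhs}, which is already invoked to define $K_\bullet$. So it suffices to show that the \emph{shifted} perverse Leray filtration $\widetilde P_\bullet H^k(Y,\bQ):=P_{\bullet+k-n}H^k(Y,\bQ)$, regarded as a filtration on each $H^k(Y,\bQ)$ centred at $n$, satisfies these same two properties for $N$; then $\widetilde P_\bullet = K_\bullet$ by uniqueness, which is exactly the claimed isomorphism on graded pieces (indeed an equality of filtrations).

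First I would record the shift conventions carefully. With $\widetilde P_l H^k = P_{l+k-n}H^k$, the graded piece is $\Gr^{\widetilde P}_l H^k = \Gr^P_{l+k-n}H^k$, so the statement to prove is precisely $\Gr^K_l H^k \cong \Gr^{\widetilde P}_l H^k$. Next I would check property (i): the map $N\colon H^k(Y,\bQ)\to H^{k+2}(Y,\bQ)(1)$ sends $P_j H^k$ into $P_j H^{k+2}$ (this is the content of the diagram preceding Proposition \ref{prop:cupproduct}, coming from $N=i^*i_!$ together with Proposition \ref{prop:perverselocalization}); translating into the shifted indexing, $j = l + k - n$ corresponds on the target to shifted index $j - (k+2) + n = l - 2$, so $N$ carries $\widetilde P_l$ into $\widetilde P_{l-2}$, exactly the shift-by-$(-2)$ property, and it induces $N\colon \Gr^{\widetilde P}_l H^k\to \Gr^{\widetilde P}_{l-2}H^{k+2}(1)$.

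Then I would verify property (ii): the induced map $N^r\colon \Gr^{\widetilde P}_{n+r}H^k \to \Gr^{\widetilde P}_{n-r}H^{k+2r}(r)$ is an isomorphism for all $r>0$. Unwinding the shift, the source is $\Gr^P_{(n+r)+k-n}H^k = \Gr^P_{k+r}H^k$ and the target is $\Gr^P_{k-r}H^{k+2r}(r)$; but by Lemma \ref{lem:lerayisomorphisms} (Poincaré–Verdier duality composed with Hard Lefschetz) $\Gr^P_{k-r}H^{k+2r} \cong \Gr^P_{k+r}H^k$ up to Tate twist, so both graded pieces have the same dimension, and it suffices to prove injectivity (or surjectivity) of $N^r$. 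For this I would reduce to the case $k = l - r$ with $l = k+r$, i.e.\ the map $[Z_{-r}]\cup-\colon \Gr^P_l H^{l-r}(Y)\to \Gr^P_l H^{l+r}(Y)(r)$, which is exactly the isomorphism of Proposition \ref{prop:cupproduct} — valid for $0<r\le m$ and $l\ge r$. The remaining bookkeeping is to see that the vanishing lemmas handle the other values: by Lemma \ref{lem:Yvanishing}, $\Gr^P_{k+r}H^k(Y)$ and $\Gr^P_{k-r}H^{k+2r}(Y)$ both vanish once $r>m$ (since then $k+r > k+m$), and for $r\le m$ one is in the range covered by Proposition \ref{prop:cupproduct} after matching $l := k+r \ge r$; the boundary behaviour when $l<r$ is forced to be trivial again by Lemma \ref{lem:Yvanishing}. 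Invoking uniqueness of the weight filtration centred at $n$ then identifies $\widetilde P_\bullet$ with $K_\bullet$ and finishes the proof, with the Hodge structure statement coming for free since both filtrations are filtrations by sub-mixed-Hodge-structures of the standard MHS on $H^k(Y,\bQ)$.

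The main obstacle I anticipate is not conceptual but the index-chasing: making sure the two shifts (the $+k-n$ in the definition of $\widetilde P$, and the degree shift $k\mapsto k+2$ under $N$) combine correctly so that the "$N$ shifts by $-2$" and "$N^r$ is an iso on $\Gr_{n\pm r}$" conditions come out with the right centring at $n$, and that the hypotheses $0<r\le m$, $l\ge r$ of Proposition \ref{prop:cupproduct} cover precisely the graded pieces left nonzero by Lemma \ref{lem:Yvanishing}. Once the indices are pinned down, the proof is a direct appeal to uniqueness of the weight filtration together with Propositions \ref{prop:cupproduct} and the vanishing lemmas.
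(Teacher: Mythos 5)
Your proposal is correct and takes essentially the same route as the paper: both arguments invoke the uniqueness of the weight filtration of the nilpotent operator $[Z]\cup -$ centred at $n$, verify the shift-by-$(-2)$ property from the factorization $[Z]\cup - = i^*i_!$ through $\Gr^P_{l-1}H^k(Z)$, and identify the Lefschetz-type property with Proposition \ref{prop:cupproduct}. The only blemish is an index slip when unwinding the target of $N^r$: it should be $\Gr^P_{(n-r)+(k+2r)-n}H^{k+2r}=\Gr^P_{k+r}H^{k+2r}$ rather than $\Gr^P_{k-r}H^{k+2r}$, so that setting $l=k+r$ yields Proposition \ref{prop:cupproduct} directly and the detour through Lemma \ref{lem:lerayisomorphisms} is unnecessary.
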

\begin{proof} This is essentially \cite[Proposition 5.2.4]{htam}, but some effort is required to extract the statement as written above. As the proof is short we include it for clarity.

By \cite[Lemma-Definition 11.9]{mhs}, the cup-product filtration is uniquely defined by the two properties
\begin{enumerate}
\item $[Z] \cup -$ maps $K_i$ into $K_{i-2}$ for all $i \geq 2$, and
\item the map
\[[Z_{-r}] \cup -\colon \Gr^K_{n+r}H^*(Y,\bQ) \longrightarrow \Gr^K_{n-r}H^*(Y,\bQ)\]
is an isomorphism for all $0 < r \leq m = \dim_{\bC}(B)$.
\end{enumerate}

To prove that $K_{\bullet}$ coincides with the shifted perverse Leray filtration, it suffices to show that the shifted perverse Leray filtration satisfies these two properties. Both are easily verified from the discussion above.
%Indeed, from the discussion above, we have
%\[[Z] \cup - : \Gr^P_{l + k - n}H^k(Y,\bQ) \longrightarrow \Gr^P_{l+(k+2) - n - 2}H^{k+2}(Y,\bQ),\] 
%proving (1), and (2) is precisely Proposition \ref{prop:cupproduct}.
\end{proof}

\section{The mirror Clemens-Schmid sequence}\label{sec:mirrorCS}

We next show how we can combine the sequences from Proposition \ref{prop:perverselocalization} into a four-term sequence. Indeed, consider the diagram
\[\begin{tikzcd}[column sep=small,cramped]H^{k-1}_c(U) \ar[r] & H^{k-1}(Y) \ar[rr,dotted,"{[Z]\cup -}"] \ar[rd] &[-2.75em]&[-2.75em] H^{k+1}(Y)(1) \ar[r]  & H^{k+1}(U)(1) \ar[rr,dotted] \ar[rd]&[-2.75em]&[-2.75em] H^{k+1}_c(U)  \\ 
 & & H^{k-1}(Z)[-1] \ar[ur]\ar[rd] &  && H^{k}(Z)[-1] \ar[ur] \ar[rd] &\\
H^{k}(Y)(1) \ar[r]& H^{k}(U)(1) \ar[rr,dotted] \ar[ur] && H^{k}_c(U) \ar[r] & H^{k}(Y) \ar[rr,dotted,"{[Z]\cup -} "] \ar[ur]&& H^{k+2}(Y)(1)
\end{tikzcd}\]
obtained by interlacing the exact sequences from Proposition \ref{prop:perverselocalization}. The morphisms are all morphisms of mixed Hodge structures on the graded pieces of the perverse Leray filtration; the dotted arrows denote the morphisms induced by composition with the maps through $H^k(Z,\bQ)[-1]$. Note that the induced map $H^{k-1}(Y,\bQ) \to H^{k+1}(Y,\bQ)$ is given by cup-product with the class of $Z$ (see Subsection \ref{sec:cupproduct}). As usual, $[-1]$ denotes a shift in the perverse Leray filtration and $(1)$ denotes the Tate twist.

To prove Theorem \ref{thm:mainthm}, it remains to show the following.

\begin{proposition} \label{prop:mirrorcs}
The top and bottom rows of this diagram are exact sequences of mixed Hodge structures on the graded pieces of the perverse Leray filtration. 
\end{proposition}

\begin{proof}
As all of the morphisms respect the perverse Leray filtration, it suffices to show that the sequence is exact on the graded pieces of this filtration. We begin by proving that the subsequence
\begin{equation}\label{eq:localinvariantcycles} \Gr_l^PH^{k-1}_c(U) \longrightarrow \Gr_l^PH^{k-1}(Y) \stackrel{[Z]\cup -}{\longrightarrow} \Gr_l^PH^{k+1}(Y)\longrightarrow \Gr_l^PH^{k+1}(U)
\end{equation}
is exact at the two middle terms.

If $l < k$, by Lemma \ref{lem:Uvanishing} we have $\Gr_l^PH^k(U)  = \Gr_l^PH^{k+1}(U) = 0$, giving the diagram
\[\begin{tikzcd}\Gr_l^PH^{k-1}_c(U) \ar[r] & \Gr_l^PH^{k-1}(Y) \ar[rr,"{[Z]\cup -}"] \ar[rd] &[-2em]&[-2em] \Gr_l^PH^{k+1}(Y) \ar[r]  & 0.  \\ 
 & & \Gr_{l-1}^PH^{k-1}(Z) \ar[ur,equal] &&  
\end{tikzcd}\]
In this case it suffices to prove that the cup-product map is surjective. If $l \leq k-m$, for $m = \dim_{\bC}(B)$, Lemma \ref{lem:Yvanishing} gives $\Gr_l^PH^{k+1}(Y) = 0$ and this is immediate. If $k-m < l < k$, surjectivity follows from the fact that, by Proposition \ref{prop:cupproduct}, the composition
\[\begin{tikzcd}[column sep=large]\Gr_l^PH^{2l-k-1}(Y) \ar[r,"{[Z_{l-k}]\cup -}"] &  \Gr_l^PH^{k-1}(Y) \ar[r,"{[Z]\cup -}"] & \Gr_l^PH^{k+1}(Y)\end{tikzcd}\]
is an isomorphism.

If $l > k$, applying Lemma \ref{lem:Uvanishing} gives $\Gr_l^PH_c^{k-1}(U)  = \Gr_l^PH^{k}_c(U) = 0$, so we obtain the diagram
\[\begin{tikzcd}0 \ar[r] & \Gr_l^PH^{k-1}(Y) \ar[rr,"{[Z]\cup -}"] \ar[rd,equal] &[-2em]&[-2em] \Gr_l^PH^{k+1}(Y) \ar[r]  & \Gr_l^PH^{k+1}(U).  \\ 
& &  \Gr_{l-1}^PH^{k-1}(Z) \ar[ur] &&  
\end{tikzcd}\]
In this case it suffices to prove that the cup-product map is injective. If $l \geq k+m$, Lemma \ref{lem:Yvanishing} gives $\Gr_l^PH^{k-1}(Y) = 0$ and this is immediate. If $k < l < k+m$, injectivity follows from the fact that, by Proposition \ref{prop:cupproduct}, the composition
\[\begin{tikzcd}[column sep=large] \Gr_l^PH^{k-1}(Y) \ar[r,"{[Z]\cup -}"] & \Gr_l^PH^{k+1}(Y)\ar[r,"{[Z_{k-l}]\cup -}"] &  \Gr_l^PH^{2l-k+1}(Y) \end{tikzcd}\]
is an isomorphism.

Finally, if $l = k$, by Lemma \ref{lem:Uvanishing} we obtain $\Gr_l^PH_c^{k-1}(U)  = \Gr_l^PH^{k+1}(U) = 0$ and our diagram becomes
\[\begin{tikzcd} 0 \ar[r] & \Gr_l^PH^{k-1}(Y) \ar[rr,"{[Z]\cup -}"] \ar[rd] &[-2em]&[-2em] \Gr_l^PH^{k+1}(Y) \ar[r]  & 0.  \\ 
& &  \Gr_{l-1}^PH^{k-1}(Z) \ar[ur] && 
\end{tikzcd}\]
In this case it suffices to prove that the cup-product map is an isomorphism, but this is precisely the $r=1$ case of Proposition \ref{prop:cupproduct}.

With this complete, the proof of the theorem follows by a straightforward diagram chase.
\end{proof}

\begin{remark} In the appendix to \cite{htdicdt}, Saito shows that exactness of the classical Clemens-Schmid sequence is equivalent to both the \emph{Decomposition Theorem} and the \emph{Local Invariant Cycle Property}, and that these properties may be formulated on the level of the derived categories \cite[Section A.4]{htdicdt}. It should be possible to extract our result above by similar methods, albeit in a different category to the one considered by Saito: in this case the Decomposition Theorem is the result of the same name for perverse sheaves proved by Beilinson, Bernstein, and Deligne \cite{fp} (see also \cite[Theorem 2.1.1]{htam}), and the Local Invariant Cycle Property is essentially exactness of the sequence \eqref{eq:localinvariantcycles}.
\end{remark}

\begin{remark} It is possible to interlace another well-known long exact sequence of mixed Hodge structures with the two localization sequences and the mirror Clemens-Schmid sequence, as follows. Indeed, let $\varphi\colon E \to Y$ denote the total space of the $\bC^*$-bundle associated to the line bundle $\calO_Y(Z)$, equipped with the usual mixed Hodge structure on an open variety; note that the Euler class of $E$ is $[Z]$. Then we have the Gysin sequence 
\[\cdots \to H^k(Y,\bQ)(1)  \stackrel{\varphi^*}{\longrightarrow} H^k(E,\bQ)(1) \stackrel{\varphi_!}{\longrightarrow} H^{k-1}(Y,\bQ) \stackrel{[Z]\cup -}{\longrightarrow}  H^{k+1}(Y,\bQ)(1) \to \cdots,\]
which interlaces with our other sequences as follows,
\[\begin{tikzcd}[column sep=small,cramped]H^{k-1}_c(U) \ar[rr] &[-2.75em]&[-2.75em] H^{k-1}(Y) \ar[rr,"{[Z]\cup -}"] \ar[rd] &[-2.5em]&[-2.5em] H^{k+1}(Y)(1) \ar[rr] \ar[dr] &[-2.75em]&[-2.75em] H^{k+1}(U)(1) \ar[rr] \ar[rd]&[-2.5em]&[-2.5em] H^{k+1}_c(U)  \\ 
 &H^k(E)(1) \ar[ur]& & H^{k-1}(Z)\ar[ur]\ar[rd] & & H^{k+1}(E)(1)\ar[dr] && H^{k}(Z) \ar[ur] \ar[rd] &\\
H^{k}(Y)(1) \ar[rr] \ar[ur]&& H^{k}(U)(1) \ar[rr] \ar[ur] && H^{k}_c(U) \ar[rr] && H^{k}(Y) \ar[rr,"{[Z]\cup -} "] \ar[ur]&& H^{k+2}(Y)(1).
\end{tikzcd}\]
A simple diagram chase, using Propositon \ref{prop:mirrorcs}, verifies that everything in this diagram commutes. Note that we have suppressed the perverse shifts: it is not particularly clear how to define a perverse filtration on the cohomology of $E$ in a way that is compatible with the rest of the picture.
\end{remark}

\section{Mirror symmetry}\label{sec:mirror}

\subsection{The Clemens-Schmid Sequence}

We call the four-term exact sequence constructed in the previous section the \emph{mirror Clemens-Schmid sequence}. To explain our choice of nomenclature and the relation of these results to mirror symmetry, we first take a closer look at the usual Clemens-Schmid sequence. We begin by recalling the standard setup; for more detail the reader may consult \cite[Chapter 11]{mhs} or \cite{csesa}.

For a semistable degeneration $\calX \to \Delta$ (see Definition \ref{def:degeneration}), the Clemens-Schmid sequence is the four-term exact sequence of mixed Hodge structures
\[\cdots\to H^k(\calX) \stackrel{i^*}{\longrightarrow} H^k_{\lim}(X) \stackrel{\nu}{\longrightarrow} H^k_{\lim}(X)(-1) \longrightarrow H^{k+2}_{X_0}(\calX) \longrightarrow H^{k+2}(\calX) \to\cdots\]
where
\begin{itemize}
\item $H^k_{\lim}(X)$ is the usual cohomology of $X$ equipped with the limiting mixed Hodge structure;
\item $H^k(\calX) \cong H^k(X_0)$ is equipped with Deligne's mixed Hodge structure on a normal crossing variety;
\item $H^{k+2}_{X_0}(\calX)$ is Poincar\'{e} dual to $H_{2n-k}(\calX)$, which is given a mixed Hodge structure by identification with $\Hom(H^{2n-k}(\calX),\bQ)$;
\item $\nu\colon H^k_{\lim}(X) \to H^k_{\lim}(X)(-1)$ is the morphism of mixed Hodge structures induced by the logarithm of monodromy;
\item $i\colon X \hookrightarrow \calX$ is the inclusion.
\end{itemize}

To discuss mirror symmetry, we would like to place a perverse Leray filtration on the terms of this sequence, corresponding to the map $\calX \to \Delta$. For the cohomology of $X$ this is straightforward: as $X$ is fibred over a point, the perverse Leray filtration is trivial and concentrated in middle degree; i.e. we have $\Gr^P_kH^k(X,\bQ) = H^k(X,\bQ)$ and all of the other graded pieces vanish.

For $\calX$, however, whilst one may define the perverse Leray filtration to be the filtration induced by the perverse Leray spectral sequence in the same way as before, most of the results that we have used to study it do not hold in this setting, as the base $\Delta$ of the fibration is not an algebraic variety.  Instead we apply results of Kerr and Laza \cite{htdicdt}, who have recently studied the interaction between the perverse Leray filtration and the Clemens-Schmid sequence.

With our indexing convention, the results of \cite[Section 10]{htdicdt} give a simple description of the perverse Leray filtration on the cohomology of $\calX$.
%\begin{align*}
%P_{k-1}H^k(\calX,\bQ) &=  0  \\
%P_kH^k(\calX,\bQ) &=  \ker\{i^*\colon H^k(\calX,\bQ) \to H^k(X,\bQ)\} \\
%P_{k+1}H^k(\calX,\bQ) &= H^k(\calX,\bQ).
%\end{align*}
The nonzero graded pieces are
\begin{align*}
\Gr^P_{k}H^k(\calX,\bQ) &= \ker\{i^*\colon H^k(\calX,\bQ) \to H^k(X,\bQ)\}\\
\Gr^P_{k+1}H^k(\calX,\bQ) &= \coim\{i^*\colon H^k(\calX,\bQ) \to H^k(X,\bQ)\}.
\end{align*}
Note that the filtrand $P_kH^k(\calX,\bQ) = \ker(i^*)$ is called the \emph{phantom cohomology} by Kerr and Laza.  By the discussion in \cite[Section 4]{htdicdt}, each of these graded pieces admits a natural mixed Hodge structure induced by Deligne's mixed Hodge structure on $H^k(\calX,\bQ) \cong H^k(X_0,\bQ)$.

Finally, to compute the perverse Leray filtration on $H^{k}_{X_0}(\calX,\bQ)$ we apply Poincar\'{e} duality (see \cite[Section 4]{htdicdt}), noting that $i^*$ is dual to the map $H^{k-2}_{\lim}(X) \to H^{k}_{X_0}(\calX)$ from the Clemens-Schmid sequence. 
%We obtain the filtration
%\begin{align*}
%P_{k-2}H^{k}_{X_0}(\calX,\bQ) &=  0  \\
%P_{k-1}H^{k}_{X_0}(\calX,\bQ) &=  \im\{H^{k-2}_{\lim}(X,\bQ) \to H^{k}_{X_0}(\calX,\bQ)\} \\
%P_{k}H^{k}_{X_0}(\calX,\bQ) &= H^{k}_{X_0}(\calX,\bQ).
%\end{align*}
The nonzero graded pieces are
\begin{align*}
\Gr^P_{k-1}H^{k}_{X_0}(\calX,\bQ) &=  \im\{H^{k-2}_{\lim}(X,\bQ) \to H^{k}_{X_0}(\calX,\bQ)\} \\
\Gr^P_{k}H^{k}_{X_0}(\calX,\bQ) &=  \coker\{H^{k-2}_{\lim}(X,\bQ) \to H^{k}_{X_0}(\calX,\bQ)\}.
\end{align*}
It follows from the equivalent statement for $H^k(\calX,\bQ)$ that the mixed Hodge structure on $H^{k}_{X_0}(\calX,\bQ)$ induces mixed Hodge structures on the graded pieces of $P_{\bullet}$. The following result is immediate from these considerations.

\begin{proposition}\label{prop:perverseclemensschmid} The Clemens-Schmid sequence
\[\to H^k(\calX)[1] \stackrel{i^*}{\longrightarrow} H^k_{\lim}(X) \stackrel{\nu}{\longrightarrow} H^k_{\lim}(X)(-1) \longrightarrow H^{k+2}_{X_0}(\calX)[1] \longrightarrow H^{k+2}(\calX)[1] \to\]
splits into exact sequences of mixed Hodge structures on the graded pieces of the perverse Leray filtration $P_{\bullet}$. Here $[1]$ denotes a shift in the perverse Leray filtration and $(-1)$ denotes the usual Tate twist on the mixed Hodge structure.
\end{proposition}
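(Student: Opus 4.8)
The base $\Delta$ of the fibration $\calX\to\Delta$ is not algebraic, so the sheaf-theoretic argument used for Proposition~\ref{prop:perverselocalization} is not available here; instead, the plan is to deduce everything from the explicit descriptions of $P_{\bullet}$ on $H^k(\calX,\bQ)$ and $H^k_{X_0}(\calX,\bQ)$ recorded above (following \cite{htdicdt}), together with the classical fact that the unfiltered Clemens--Schmid sequence is an exact sequence of mixed Hodge structures.

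First I would check that each of the four maps in the shifted sequence carries $P_l$ into $P_l$. Since $H^k_{\lim}(X,\bQ)$ carries the trivial perverse Leray filtration concentrated in degree $k$, the map $\nu$ is automatically filtered, and $i^*\colon H^k(\calX)[1]\to H^k_{\lim}(X)$ is filtered because its only nontrivial constraint is that $P_kH^k(\calX,\bQ)=\ker(i^*)$ be sent into $P_{k-1}H^k_{\lim}(X,\bQ)=0$, which holds by the very definition of the filtrand. The connecting map $H^k_{\lim}(X)(-1)\to H^{k+2}_{X_0}(\calX)[1]$ and the map $H^k_{X_0}(\calX)[1]\to H^k(\calX)[1]$ are filtered because, by exactness of the Clemens--Schmid sequence, their images are exactly the filtrands $\im\{H^k_{\lim}(X,\bQ)\to H^{k+2}_{X_0}(\calX,\bQ)\}=P_{k+1}H^{k+2}_{X_0}(\calX,\bQ)$ and $\ker(i^*)=P_kH^k(\calX,\bQ)$ respectively, while the sub-filtrand $\im\{H^{k-2}_{\lim}(X,\bQ)\to H^k_{X_0}(\calX,\bQ)\}=P_{k-1}H^k_{X_0}(\calX,\bQ)$, on which the latter map vanishes, is sent into $0$. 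As the mixed Hodge structure on each graded piece is induced from Deligne's mixed Hodge structure on the ambient cohomology group, the induced maps on $\Gr^P_{\bullet}$ are automatically morphisms of mixed Hodge structures; and since mixed Hodge structures form an abelian category on which the forgetful functor to $\bQ$-vector spaces is exact and faithful, it then suffices to check that the graded sequences are exact as sequences of vector spaces.

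To do this, I would fix $l$ and apply $\Gr^P_l$ to the whole shifted sequence. Each filtration involved is supported in at most two consecutive cohomological degrees, so almost every term vanishes, and one finds that --- excluding the trivial cases near the ends, where the relevant cohomology vanishes outright --- the sequence collapses, according to the parity of $l$, to either
\[0\longrightarrow \coim\{i^*\colon H^l(\calX)\to H^l_{\lim}(X)\}\longrightarrow H^l_{\lim}(X)\stackrel{\nu}{\longrightarrow} H^l_{\lim}(X)(-1)\longrightarrow \im\{H^l_{\lim}(X)(-1)\to H^{l+2}_{X_0}(\calX)\}\longrightarrow 0\]
or to the two-term sequence
\[0\longrightarrow \coker\{H^{l-1}_{\lim}(X)\to H^{l+1}_{X_0}(\calX)\}\longrightarrow \ker\{i^*\colon H^{l+1}(\calX)\to H^{l+1}_{\lim}(X)\}\longrightarrow 0.\]
Exactness of the first is immediate from exactness of the Clemens--Schmid sequence together with the canonical isomorphism $\coim\stackrel{\sim}{\longrightarrow}\im$; exactness of the second is precisely the assertion that $\coim\stackrel{\sim}{\longrightarrow}\im$ for the Clemens--Schmid map $H^{l+1}_{X_0}(\calX)\to H^{l+1}(\calX)$, using exactness of the Clemens--Schmid sequence at the two adjacent positions.

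I do not anticipate a serious obstacle: the substance is already contained in \cite{htdicdt}, and the only thing that really demands care is the bookkeeping --- tracking the shifts $[1]$ and correctly determining, for each $l$, which graded pieces survive. The step most likely to hide a sign error is the verification that all four maps respect the shifted filtration; once that is in place, the rest is a short and essentially mechanical diagram chase with the explicit formulas.
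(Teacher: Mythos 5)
Your argument is correct and follows essentially the same route as the paper: pass to the graded pieces of $P_{\bullet}$, observe that for each $l$ the sequence collapses to either the five-term piece $0 \to \coim(i^*) \to H^k_{\lim}(X) \to H^k_{\lim}(X) \to \im\{\cdot\} \to 0$ or the two-term piece $0 \to \coker\{\cdot\} \to \ker(i^*) \to 0$, and deduce exactness from exactness of the unfiltered Clemens--Schmid sequence. Your additional verification that the four maps respect the shifted filtration is a detail the paper leaves implicit, and your bookkeeping of the shifts and surviving graded pieces agrees with the paper's.
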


%\begin{proof}
%It suffices to show exactness on the graded pieces of $P_{\bullet}$. Taking graded pieces, the sequence splits into subsequences
%\begin{align*}
%\Gr_{k-1}^P\colon&\quad 0 \to \coker\{H^{k-2}_{\lim}(X) \to H^{k}_{X_0}(\calX)\} \to \ker(i^*) \to 0,\\
%\Gr_{k}^P\colon& \quad 0 \to \coim(i^*) \to H^k_{\lim}(X) \to H^k_{\lim}(X) \to \im\{H^k_{\lim}(X) \to H^{k+2}_{X_0}(\calX)\} \to 0.
%\end{align*}
%Exactness then follows immediately from exactness of the Clemens-Schmid sequence.
%\end{proof}

\subsection{A mirror conjecture}

Suppose now that $\calX \to \Delta$ is a $K$-trivial semistable degeneration of Calabi-Yau varieties of length $\lambda$ (see Definitions \ref{def:length} and \ref{def:Ktrivial}). Following the general philosophy of \cite{mstdfcym}, one might expect such a degeneration to be mirror to a fibration on the mirror Calabi-Yau variety $\pi\colon Y \to B$, with $\dim_{\bC}(B) = \lambda - 1$. The map $\nu$ is mirror to the cup product
\[[Z] \cup - : H^k(Y,\bQ) \longrightarrow H^{k+2}(Y,\bQ)(1);\]
where $\nu$ moves us horizontally across the Hodge diamond, $[Z] \cup -$ moves us downwards.

Under such a correspondence, we claim that the Clemens-Schmid sequence and the mirror Clemens-Schmid sequence should be thought of as mirror to one another. Comparing terms in these two sequences, noting that mirror symmetry is expected to exchange Hodge numbers and exchange the weight and perverse Leray filtrations, gives rise to Conjecture \ref{con:mirrorfiltrations}.

\begin{remark} The unexpected $+1$ in the $P_{\bullet}$ index of 
\[\Gr_F^{n-p}\Gr^W_{n+l-2p}\Gr^P_{n+q-2p+1}H^{n+k-2p}(\calX)\]
in Conjecture \ref{con:mirrorfiltrations} compensates for the fact that $\calX$ has complex dimension $n + 1$, which shifts the perverse Leray filtration $P_{\bullet}$ up by one relative to the cohomology of $X$, $Y$, and $U$, all of which have dimension $n$. This phenomenon also appears in the localization sequences (Proposition \ref{prop:perverselocalization}) and the Clemens-Schmid sequence (Proposition \ref{prop:perverseclemensschmid}), which all contain shifts in the perverse Leray filtration to compensate for differing dimensions.
\end{remark}

\section{K3 surfaces}\label{sec:K3}

In this section we will prove Theorem \ref{thm:K3evidence}, which provides some evidence for Conjecture \ref{con:mirrorfiltrations} in the K3 surface setting.

\subsection{Fibrations on K3 surfaces}\label{sec:K3fibex}

We first compute the various filtrations from Section \ref{sec:perverse} in the case where $Y$ is a smooth K3 surface. 

\subsubsection{$B$ is a curve} \label{sec:Bcurve} Begin by assuming that the base $B$ is a curve. Then necessarily $B \cong \bP^1$ and the fibres of $\pi$ are generically genus one curves. Rather than taking the trivial embedding of $B$ into $\bP^1$, we instead take the $r$-fold Veronese embedding, so that $C$ is a union of $r$ points and $Z$ is a union of $r$ disjoint genus one fibres in $Y$.

With this setup it is easy to compute the following mixed Hodge structures $(W_{\bullet},F^{\bullet})$ on the graded pieces of the perverse Leray filtration $P_{\bullet}$, using Propositions \ref{prop:perverselocalization} and \ref{prop:cupperverse}. The entries in the tables show the dimensions of the graded pieces $\Gr_F^p\Gr^W_q\Gr^P_lH^k$. All other graded pieces of $P_{\bullet}$ are zero.

For the cohomology of $Y$ we have the following filtrations.

\begin{center}
\begin{TAB}{|c|c|c|c|c|c|c|}{|c|t|}
$\Gr_1^PH^0(Y)$&&$\Gr_1^PH^2(Y)$&$\Gr_2^PH^2(Y)$&$\Gr_3^PH^2(Y)$ && $\Gr_3^PH^4(Y)$\\
\begin{tabular}{c|c}
\diagbox[innerleftsep=1pt,innerrightsep=1pt,width=18pt,height=18pt]{\scriptsize{$F$}}{\scriptsize{$W$}} & $0$\\
\hline
$0$& $1$
\end{tabular}&&
\begin{tabular}{c|c}
\diagbox[innerleftsep=1pt,innerrightsep=1pt,width=18pt,height=18pt]{\scriptsize{$F$}}{\scriptsize{$W$}} & $2$\\
\hline
$1$& $1$
\end{tabular}&
\begin{tabular}{c|c}
\diagbox[innerleftsep=1pt,innerrightsep=1pt,width=18pt,height=18pt]{\scriptsize{$F$}}{\scriptsize{$W$}} & $2$\\
\hline
$0$& $1$\\
$1$ & $18$\\
$2$ & $1$
\end{tabular}&
\begin{tabular}{c|c}
\diagbox[innerleftsep=1pt,innerrightsep=1pt,width=18pt,height=18pt]{\scriptsize{$F$}}{\scriptsize{$W$}} & $2$\\
\hline
$1$& $1$
\end{tabular}
&&
\begin{tabular}{c|c}
\diagbox[innerleftsep=1pt,innerrightsep=1pt,width=18pt,height=18pt]{\scriptsize{$F$}}{\scriptsize{$W$}} & $4$\\
\hline
$2$& $1$
\end{tabular}
\end{TAB}
\end{center}

%For the cohomology of $Z$ we have the following filtrations.
%
%\begin{center}
%\begin{TAB}{|c|c|c|c|c|}{|c|t|}
%$\Gr_0^PH^0(Z)$&&$\Gr_1^PH^1(Z)$&& $\Gr_2^PH^2(Z)$\\
%\begin{tabular}{c|c}
%\diagbox[innerleftsep=1pt,innerrightsep=1pt,width=18pt,height=18pt]{\scriptsize{$F$}}{\scriptsize{$W$}} & $0$\\
%\hline
%$0$& $r$
%\end{tabular}&&
%\begin{tabular}{c|c}
%\diagbox[innerleftsep=1pt,innerrightsep=1pt,width=18pt,height=18pt]{\scriptsize{$F$}}{\scriptsize{$W$}} & $1$\\
%\hline
%$0$& $r$\\
%$1$ & $r$
%\end{tabular}
%&&
%\begin{tabular}{c|c}
%\diagbox[innerleftsep=1pt,innerrightsep=1pt,width=18pt,height=18pt]{\scriptsize{$F$}}{\scriptsize{$W$}} & $2$\\
%\hline
%$1$& $r$
%\end{tabular}
%\end{TAB}
%\end{center}

For the compactly supported cohomology of $U$ we have the following filtrations. The filtrations on the cohomology of $U$ can be obtained from these by Poincar\'{e}-Verdier duality.

\begin{center}
\begin{TAB}{|c|c|c|c|c|c|c|c|}{|c|t|}
$\Gr_1^PH^1_c(U)$&&$\Gr_1^PH^2_c(U)$&$\Gr_2^PH^2_c(U)$&&$\Gr_3^PH^3_c(U)$&& $\Gr_3^PH^4_c(U)$\\
\begin{tabular}{c|c}
\diagbox[innerleftsep=1pt,innerrightsep=1pt,width=18pt,height=18pt]{\scriptsize{$F$}}{\scriptsize{$W$}} & $0$\\
\hline
$0$& $r-1$
\end{tabular}&&
\begin{tabular}{c|c}
\diagbox[innerleftsep=1pt,innerrightsep=1pt,width=18pt,height=18pt]{\scriptsize{$F$}}{\scriptsize{$W$}} & $2$\\
\hline
$1$& $1$
\end{tabular}&
\begin{tabular}{c|cc}
\diagbox[innerleftsep=1pt,innerrightsep=1pt,width=18pt,height=18pt]{\scriptsize{$F$}}{\scriptsize{$W$}} & $1$&$2$\\
\hline
$0$& $r$ & $1$\\
$1$ & $r$ & $18$\\
$2$ & $0$ & $1$
\end{tabular}
&&
\begin{tabular}{c|c}
\diagbox[innerleftsep=1pt,innerrightsep=1pt,width=18pt,height=18pt]{\scriptsize{$F$}}{\scriptsize{$W$}} & $2$\\
\hline
$1$& $r-1$
\end{tabular}
&&
\begin{tabular}{c|c}
\diagbox[innerleftsep=1pt,innerrightsep=1pt,width=18pt,height=18pt]{\scriptsize{$F$}}{\scriptsize{$W$}} & $4$\\
\hline
$2$& $1$
\end{tabular}
\end{TAB}
\end{center}

\subsubsection{$B$ is a surface} \label{sec:Bsurface} Next assume that $B$ is a (possibly singular) surface. As the preimage of a general hyperplane section, the curve $Z \subset Y$ is smooth by Bertini's Theorem and connected by \cite[Proposition 1]{ctpvaism}. Let $g$ denote the genus of $Z$. With this setup, the graded pieces of the various filtrations are as follows, where we present the data in the same form as above.

For the cohomology of $Y$ we have the following filtrations.

\begin{center}
\begin{TAB}{|c|c|c|c|c|}{|c|t|}
$\Gr_2^PH^0(Y)$&&$\Gr_2^PH^2(Y)$ && $\Gr_2^PH^4(Y)$\\
\begin{tabular}{c|c}
\diagbox[innerleftsep=1pt,innerrightsep=1pt,width=18pt,height=18pt]{\scriptsize{$F$}}{\scriptsize{$W$}} & $0$\\
\hline
$0$& $1$
\end{tabular}&&
\begin{tabular}{c|c}
\diagbox[innerleftsep=1pt,innerrightsep=1pt,width=18pt,height=18pt]{\scriptsize{$F$}}{\scriptsize{$W$}} & $2$\\
\hline
$0$ & $1$\\
$1$& $20$\\
$2$ & $1$
\end{tabular}
&&
\begin{tabular}{c|c}
\diagbox[innerleftsep=1pt,innerrightsep=1pt,width=18pt,height=18pt]{\scriptsize{$F$}}{\scriptsize{$W$}} & $4$\\
\hline
$2$& $1$
\end{tabular}
\end{TAB}
\end{center}

%For the cohomology of $Z$ we have the following filtrations.
%
%\begin{center}
%\begin{TAB}{|c|c|c|c|c|}{|c|t|}
%$\Gr_1^PH^0(Z)$&&$\Gr_1^PH^1(Z)$&& $\Gr_1^PH^2(Z)$\\
%\begin{tabular}{c|c}
%\diagbox[innerleftsep=1pt,innerrightsep=1pt,width=18pt,height=18pt]{\scriptsize{$F$}}{\scriptsize{$W$}} & $0$\\
%\hline
%$0$& $1$
%\end{tabular}&&
%\begin{tabular}{c|c}
%\diagbox[innerleftsep=1pt,innerrightsep=1pt,width=18pt,height=18pt]{\scriptsize{$F$}}{\scriptsize{$W$}} & $1$\\
%\hline
%$0$& $g$\\
%$1$ & $g$
%\end{tabular}
%&&
%\begin{tabular}{c|c}
%\diagbox[innerleftsep=1pt,innerrightsep=1pt,width=18pt,height=18pt]{\scriptsize{$F$}}{\scriptsize{$W$}} & $2$\\
%\hline
%$1$& $1$
%\end{tabular}
%\end{TAB}
%\end{center}

For the compactly supported cohomology of $U$ we have the following filtrations. The filtrations on the cohomology of $U$ can be obtained from these by Poincar\'{e}-Verdier duality.

\begin{center}
\begin{TAB}{|c|c|c|}{|c|t|}
$\Gr_2^PH^2_c(U)$&& $\Gr_2^PH^4_c(U)$\\

\begin{tabular}{c|cc}
\diagbox[innerleftsep=1pt,innerrightsep=1pt,width=18pt,height=18pt]{\scriptsize{$F$}}{\scriptsize{$W$}} & $1$&$2$\\
\hline
$0$& $g$ & $1$\\
$1$ & $g$ & $19$\\
$2$ & $0$ & $1$
\end{tabular}
&&
\begin{tabular}{c|c}
\diagbox[innerleftsep=1pt,innerrightsep=1pt,width=18pt,height=18pt]{\scriptsize{$F$}}{\scriptsize{$W$}} & $4$\\
\hline
$2$& $1$
\end{tabular}
\end{TAB}
\end{center}

\subsection{Degenerations of K3 surfaces}\label{sec:K3degex}

We next exhibit the various filtrations from Section \ref{sec:mirror} in the case where $\calX \to \Delta$ is a $K$-trivial semistable degeneration of K3 surfaces. Recall, in particular, that such degenerations of lengths $1$, $2$, and $3$ correspond precisely to degenerations of Types I, II, and III, respectively.  A good summary of the geometry of such degenerations of K3 surfaces may be found in \cite[Section 6.5]{lok3s} and a more detailed discussion of the Clemens-Schmid sequence in this setting can be found in \cite[Section 4(d)]{csesa}.

\subsubsection{Type II degeneration} \label{sec:typeii} We begin by considering a Type II degeneration of K3 surfaces. The central fibre $X_0$ of such a degeneration is a chain of surfaces ruled over elliptic curves, glued along sections of the rulings, with a rational surface at each end of the chain. We assume that $X_0$ has $(r+1)$ components.

As in Subsection \ref{sec:K3fibex}, we describe below the mixed Hodge structures $(W_{\bullet},F^{\bullet})$ on the graded pieces of the filtration $P_{\bullet}$. The entries in the tables show the dimensions of the graded pieces $\Gr_F^p\Gr^W_q\Gr^P_lH^k$. All other graded pieces of $P_{\bullet}$ are zero.

For the cohomology of $X$, with its limiting mixed Hodge structure, the filtrations are as follows.

\begin{center}
\begin{TAB}{|c|c|c|c|c|}{|c|t|}
$\Gr_0^PH^0_{\lim}(X)$&&$\Gr_2^PH^2_{\lim}(X)$ && $\Gr_4^PH^4_{\lim}(X)$\\
\begin{tabular}{c|c}
\diagbox[innerleftsep=1pt,innerrightsep=1pt,width=18pt,height=18pt]{\scriptsize{$F$}}{\scriptsize{$W$}} & $0$\\
\hline
$0$& $1$
\end{tabular}&&
\begin{tabular}{c|ccc}
\diagbox[innerleftsep=1pt,innerrightsep=1pt,width=18pt,height=18pt]{\scriptsize{$F$}}{\scriptsize{$W$}} & $1$ & $2$ & $3$\\
\hline
$0$& $1$ & $0$ & $0$ \\
$1$ & $1$ & $18$ & $1$ \\
$2$ & $0$ & $0$ & $1$
\end{tabular}
&&
\begin{tabular}{c|c}
\diagbox[innerleftsep=1pt,innerrightsep=1pt,width=18pt,height=18pt]{\scriptsize{$F$}}{\scriptsize{$W$}} & $4$\\
\hline
$2$& $1$
\end{tabular}
\end{TAB}
\end{center}

For the cohomology of $\calX$, with Deligne's mixed Hodge structure, the filtrations are as follows. The filtrations on $H^{k}_{X_0}(\calX)$ may be obtained from these by Poincar\'{e} duality.

\begin{center}
\begin{TAB}{|c|c|c|c|}{|c|t|}
$\Gr_1^PH^0(\calX)$&&$\Gr_2^PH^2(\calX)$&$\Gr_3^PH^2(\calX)$\\
\begin{tabular}{c|c}
\diagbox[innerleftsep=1pt,innerrightsep=1pt,width=18pt,height=18pt]{\scriptsize{$F$}}{\scriptsize{$W$}} & $0$\\
\hline
$0$& $1$
\end{tabular}&&
\begin{tabular}{c|c}
\diagbox[innerleftsep=1pt,innerrightsep=1pt,width=18pt,height=18pt]{\scriptsize{$F$}}{\scriptsize{$W$}} & $2$\\
\hline
$1$& $r$
\end{tabular}&
\begin{tabular}{c|cc}
\diagbox[innerleftsep=1pt,innerrightsep=1pt,width=18pt,height=18pt]{\scriptsize{$F$}}{\scriptsize{$W$}} & $1$ & $2$\\
\hline
$0$& $1$ & $0$\\
$1$ & $1$ & $18$
\end{tabular}
\end{TAB}
\smallskip

\begin{TAB}{|c|c|c|c|}{|c|t|}
$\Gr_3^PH^3(\calX)$&&$\Gr_4^PH^4(\calX)$ & $\Gr_5^PH^4(\calX)$\\
\begin{tabular}{c|c}
\diagbox[innerleftsep=1pt,innerrightsep=1pt,width=18pt,height=18pt]{\scriptsize{$F$}}{\scriptsize{$W$}} & $3$\\
\hline
$1$& $r-1$\\
$2$ & $r-1$
\end{tabular}
&&
\begin{tabular}{c|c}
\diagbox[innerleftsep=1pt,innerrightsep=1pt,width=18pt,height=18pt]{\scriptsize{$F$}}{\scriptsize{$W$}} & $4$\\
\hline
$2$& $r$
\end{tabular}
&
\begin{tabular}{c|c}
\diagbox[innerleftsep=1pt,innerrightsep=1pt,width=18pt,height=18pt]{\scriptsize{$F$}}{\scriptsize{$W$}} & $4$\\
\hline
$2$& $1$
\end{tabular}
\end{TAB}
\end{center}

\subsubsection{Type III degeneration} \label{sec:typeiii} Next we consider Type III degenerations. The central fibre $X_0$ of such a degeneration is a union of rational surfaces glued along anticanonical cycles, so that the dual graph of the central fibre is a sphere. We assume that $X_0$ has $(g+1)$ components. 

For the cohomology of $X$, with its limiting mixed Hodge structure, the filtrations are as follows.

\begin{center}
\begin{TAB}{|c|c|c|c|c|}{|c|t|}
$\Gr_0^PH^0_{\lim}(X)$&&$\Gr_2^PH^2_{\lim}(X)$ && $\Gr_4^PH^4_{\lim}(X)$\\
\begin{tabular}{c|c}
\diagbox[innerleftsep=1pt,innerrightsep=1pt,width=18pt,height=18pt]{\scriptsize{$F$}}{\scriptsize{$W$}} & $0$\\
\hline
$0$& $1$
\end{tabular}&&
\begin{tabular}{c|ccccc}
\diagbox[innerleftsep=1pt,innerrightsep=1pt,width=18pt,height=18pt]{\scriptsize{$F$}}{\scriptsize{$W$}} &$0$ & $1$ & $2$ & $3$ & $4$\\
\hline
$0$& $1$ & $0$& $0$ & $0$& $0$ \\
$1$ & $0$ &$0$& $20$ & $0$& $0$ \\
$2$ & $0$ & $0$ &$0$&$0$& $1$
\end{tabular}
&&
\begin{tabular}{c|c}
\diagbox[innerleftsep=1pt,innerrightsep=1pt,width=18pt,height=18pt]{\scriptsize{$F$}}{\scriptsize{$W$}} & $4$\\
\hline
$2$& $1$
\end{tabular}
\end{TAB}
\end{center}

For the cohomology of $\calX$, with Deligne's mixed Hodge structure, the filtrations are as follows. The filtrations on $H^{k}_{X_0}(\calX)$ may be obtained from these by Poincar\'{e} duality.

\begin{center}
\begin{TAB}{|c|c|c|c|c|c|c|}{|c|t|}
$\Gr_1^PH^0(\calX)$&&$\Gr_2^PH^2(\calX)$&$\Gr_3^PH^2(\calX)$&&$\Gr_4^PH^4(\calX)$ & $\Gr_5^PH^4(\calX)$\\
\begin{tabular}{c|c}
\diagbox[innerleftsep=1pt,innerrightsep=1pt,width=18pt,height=18pt]{\scriptsize{$F$}}{\scriptsize{$W$}} & $0$\\
\hline
$0$& $1$
\end{tabular}&&
\begin{tabular}{c|c}
\diagbox[innerleftsep=1pt,innerrightsep=1pt,width=18pt,height=18pt]{\scriptsize{$F$}}{\scriptsize{$W$}} & $2$\\
\hline
$1$& $g$
\end{tabular}&
\begin{tabular}{c|ccc}
\diagbox[innerleftsep=1pt,innerrightsep=1pt,width=18pt,height=18pt]{\scriptsize{$F$}}{\scriptsize{$W$}} & $0$ & $1$ & $2$\\
\hline
$0$& $1$ & $0$ &$0$\\
$1$ & $0$ & $0$ & $19$
\end{tabular}
&&
\begin{tabular}{c|c}
\diagbox[innerleftsep=1pt,innerrightsep=1pt,width=18pt,height=18pt]{\scriptsize{$F$}}{\scriptsize{$W$}} & $4$\\
\hline
$2$& $g$
\end{tabular}
&
\begin{tabular}{c|c}
\diagbox[innerleftsep=1pt,innerrightsep=1pt,width=18pt,height=18pt]{\scriptsize{$F$}}{\scriptsize{$W$}} & $4$\\
\hline
$2$& $1$
\end{tabular}
\end{TAB}
\end{center}

\subsection{Mirror symmetry for K3 surfaces}\label{sec:K3mirrorex}
Using this we can now prove Theorem \ref{thm:K3evidence}.

\subsubsection{Type II degenerations and genus $1$ fibrations} Suppose that $X$ is a generic $L$-polarized K3 surface, for some lattice $L$, that admits a Type II degeneration with $(r+1)$ components. Such a degeneration corresponds to a $1$-dimensional cusp in the Baily-Borel compactification of the moduli space of $L$-polarized K3 surfaces. Let $p$ be a $0$-dimensional cusp contained in this $1$-dimensional cusp; then we may apply the mirror construction of \cite{mslpk3s} at $p$ to construct a mirror K3 surface $Y$. Assuming $Y$ is generic, by \cite[Remark 4.2]{mstdfcym} $Y$ admits a fibration $\pi\colon Y \to \bP^1$ with general fibre a genus $1$ curve. Embed $\bP^1 \hookrightarrow \bP^r$ by the degree $r$ Veronese embedding and let $Z$ be the pullback of a generic hyperplane  in $\bP^r$, so that $Z$ is a union of $r$ disjoint fibres of $\pi$. Comparing the computations from Subsections \ref{sec:Bcurve} and \ref{sec:typeii}, it is then straightforward to verify that Theorem \ref{thm:K3evidence}(1) holds.

\subsubsection{Type III degenerations and finite morphisms} Next suppose that $Y$ is a generic K3 surface of degree $2k$. By definition, the polarizing line bundle $\calL$ on $Y$ is ample with $\calL^2  =2k$. A generic section of $\calL$ defines a  smooth curve $Z$ of genus $g = k + 1$ and $\calL$ defines a finite morphism $\pi\colon Y \to B \subset \bP^N$, where $B$ is a smooth surface \cite[Proposition 3]{fk3s}.

The mirror to $Y$, in the sense of \cite{mslpk3s}, is a K3 surface polarized by the lattice $M_k := H \oplus E_8 \oplus E_8 \oplus \langle -2k \rangle$, where $H$ denotes the hyperbolic plane lattice and $E_8$ is the negative definite $E_8$ lattice. By \cite[Proposition 1.16]{mfdnvfg2}, such K3 surfaces admit a unique (up to flops) primitive Type III degeneration, from which all other Type III degenerations may be obtained via base change. Moreover, the proof of this result shows that the central fibre of this Type III degeneration has $2k$ triple points.

Using the fact that every triple point lies at the intersection of $3$ double curves, along with the fact that the dual graph of the central fibre is a sphere which has topological Euler characteristic equal to $2$, one may check that a central fibre of a Type III degeneration with $2k$ triple points must  have $3k$ double curves and $k+2 = g+1$ components. We are therefore in the setting of Subsection \ref{sec:typeiii}. By comparing the computations from Subsections \ref{sec:Bsurface} and \ref{sec:typeiii}, it is then straightforward to verify that Theorem \ref{thm:K3evidence}(2) holds.

\begin{remark} It is natural to ask what happens if the conditions on $Y$ are relaxed to allow $Y$ to be a generic K3 surface with a lattice polarization of any rank. In this case we can choose $\calL$ to be any line bundle with $\calL^2 > 0$ that is generated by its global sections, to obtain a generically finite morphism $Y \to \bP^N$ with two-dimensional image \cite[Proposition 2]{fk3s}. Here the picture is much more complicated, but we can say more in a special case.

Suppose that $Y$ is polarized by the rank $19$ lattice $M_1 := H \oplus E_8 \oplus E_8 \oplus \langle -2 \rangle$. Choose a primitive integral vector $v$ in the nef cone of $Y$, with $v^2 = 2k > 0$, corresponding to a nef and big divisor of arithmetic genus $k+1$ on $Y$. The mirror to $Y$ is a K3 surface of degree $2$ and, by the results of \cite{spcmk3sd2}, the complex moduli space of K3 surfaces of degree $2$ admits a natural compactification with structure described by $\Nef(Y)$. In this compactification, the vector $v$ correponds to a Type III degeneration with $v^2 = 2k$ triple points and $k + 2$ components. This is in agreement with the prediction of Conjecture \ref{con:mirrorfiltrations}.\end{remark}

\subsubsection{Base change and Veronese embeddings} Given a semistable degeneration of K3 surfaces $ \calX \to \Delta$, one may perform a $\mu$-fold base change by the ramified covering $z \mapsto z^{\mu}$ and resolve singularities to obtain a new semistable degeneration. This new degeneration may be described geometrically by the method from \cite[Section 1]{bgd7}: briefly, the new degeneration has the same type as the original and, in addition to the original set of components of $X_0$, each double curve in $X_0$ gives rise to $(\mu-1)$ new components and each triple point gives $\frac{1}{2}(\mu-1)(\mu-2)$ new components.

\begin{enumerate}
\item If the original degeneration was of Type II with $r+1$ components in its central fibre, the $r$ double curves give rise to $(\mu-1)r$ new components (and there are no triple points), so the new degeneration has $\mu r + 1$ components.
\item If the original degeneration was of Type III with $k+2$ components in its central fibre, the $3k$ double curves give rise to $3(\mu-1)k$ new components  and the $2k$ triple points give rise to $(\mu-1)(\mu-2)k$ new components, so the new degeneration has $\mu^2 k + 2$ components.
\end{enumerate}

On the fibration side, we may re-embed $B \subset \bP^N \hookrightarrow \bP^M$, for $M = {\begin{psmallmatrix}N+1 \\ \mu \end{psmallmatrix} - 1}$, under the degree $\mu$ Veronese embedding, so that the divisor $Z$ is replaced by a smooth divisor in the linear system $|\mu Z|$. 

\begin{enumerate}
\item If $Z$ is a union of $r$ fibres in a genus one fibration, then a generic member of $|\mu Z|$ is a union of $\mu r$ fibres.
\item If $Z$ is a smooth ample curve of genus $g = k+1$, then a generic member of $|\mu Z|$ is a smooth ample curve of genus $g = \mu^2 k + 1$, by the genus formula.
\end{enumerate}
From these computations it is straightforward to verify Theorem \ref{thm:K3evidence}(3). 

\section{Calabi-Yau Threefolds}\label{sec:CY3}

In this final section we will prove Theorem \ref{thm:CY3evidence}, which provides some evidence for Conjecture \ref{con:mirrorfiltrations} in the Calabi-Yau threefold setting.

\subsection{K3 fibrations and Tyurin degenerations} \label{sec:tyurin} 

In the case of Calabi-Yau threefolds, \cite{mstdfcym} suggests a correspondence between Tyurin degenerations of a Calabi-Yau threefold and K3 fibrations on the mirror Calabi-Yau threefold; see Conjecture \ref{con:dht}. In this section we will show that Conjecture \ref{con:mirrorfiltrations} is a consequence of Conjecture \ref{con:dht}, thereby proving Theorem \ref{thm:CY3evidence}(1). We will then use this result to verify Conjecture \ref{con:mirrorfiltrations} explicitly in some examples where Conjecture \ref{con:dht} is known to hold. These examples will be important for the proof of Theorem \ref{thm:CY3evidence}(2) in Section \ref{sec:CY3elliptic}.

\subsubsection{K3 fibrations} We begin by assuming that $\pi\colon Y \to B$ is a fibration on a Calabi-Yau threefold whose general fibre is a K3 surface. Then necessarily we have $B \cong \bP^1$ and we take the trivial embedding of $\bP^1$ into itself, so that $C$ is a single point and $Z$ is a general K3 fibre in $Y$.

The filtrations in these cases will depend upon three parameters $u,v,w \in \bZ$, defined as follows.
\begin{itemize}
\item $u$ is the rank of the subgroup of $H^2(Z,\bZ)$ that is fixed by the action of monodromy around the singular fibres of $Y$.
\item If $Y_p$ denotes the fibre of $\pi \colon Y \to B$ over a point $p \in \bP^1$ and $\rho(Y_p)$ denotes the number of irreducible components of $Y_p$, then
\[v := \sum_{p \in \bP^1} (\rho(Y_p) - 1).\]
\item $w := h^{2,1}(Y)$ is the usual Hodge number.
\end{itemize}
Note that it follows from \cite[Lemma 3.2]{cytfmqk3s} that $h^{1,1}(Y) = u + v + 1$.

With this setup we may compute the following mixed Hodge structures $(W_{\bullet},F^{\bullet})$ on the graded pieces of the perverse Leray filtration $P_{\bullet}$. As usual, the entries in the tables show the dimensions of the graded pieces $\Gr_F^p\Gr^W_q\Gr^P_lH^k$. All other graded pieces of $P_{\bullet}$ are zero.

The cohomology of $Y$ can be computed easily using Proposition \ref{prop:cupperverse} and the explicit computation of the cup-product filtration in this setting from \cite[Section 5.1]{mstdfcym}. We obtain the following filtrations.
\begin{center}
\begin{TAB}{|c|c|c|c|c|c|c|}{|c|t|}
$\Gr_1^PH^0(Y)$&&$\Gr_1^PH^2(Y)$&$\Gr_2^PH^2(Y)$&$\Gr_3^PH^2(Y)$ && $\Gr_3^PH^3(Y)$\\
\begin{tabular}{c|c}
\diagbox[innerleftsep=1pt,innerrightsep=1pt,width=18pt,height=18pt]{\scriptsize{$F$}}{\scriptsize{$W$}} & $0$\\
\hline
$0$& $1$
\end{tabular}&&
\begin{tabular}{c|c}
\diagbox[innerleftsep=1pt,innerrightsep=1pt,width=18pt,height=18pt]{\scriptsize{$F$}}{\scriptsize{$W$}} & $2$\\
\hline
$1$& $1$
\end{tabular}&
\begin{tabular}{c|c}
\diagbox[innerleftsep=1pt,innerrightsep=1pt,width=18pt,height=18pt]{\scriptsize{$F$}}{\scriptsize{$W$}} & $2$\\
\hline
$1$& $v$ 
\end{tabular}&
\begin{tabular}{c|c}
\diagbox[innerleftsep=1pt,innerrightsep=1pt,width=18pt,height=18pt]{\scriptsize{$F$}}{\scriptsize{$W$}} & $2$\\
\hline
$1$& $u$
\end{tabular}
&&
\begin{tabular}{c|c}
\diagbox[innerleftsep=1pt,innerrightsep=1pt,width=18pt,height=18pt]{\scriptsize{$F$}}{\scriptsize{$W$}} & $3$\\
\hline
$0$& $1$\\
$1$ & $w$\\
$2$&$w$\\
$3$ & $1$
\end{tabular}
\end{TAB}
\smallskip

\begin{TAB}{|c|c|c|c|c|}{|c|t|}
$\Gr_3^PH^4(Y)$&$\Gr_4^PH^4(Y)$&$\Gr_5^PH^4(Y)$ && $\Gr_5^PH^6(Y)$\\
\begin{tabular}{c|c}
\diagbox[innerleftsep=1pt,innerrightsep=1pt,width=18pt,height=18pt]{\scriptsize{$F$}}{\scriptsize{$W$}} & $4$\\
\hline
$2$& $u$
\end{tabular}&
\begin{tabular}{c|c}
\diagbox[innerleftsep=1pt,innerrightsep=1pt,width=18pt,height=18pt]{\scriptsize{$F$}}{\scriptsize{$W$}} & $4$\\
\hline
$2$ & $v$
\end{tabular}&
\begin{tabular}{c|c}
\diagbox[innerleftsep=1pt,innerrightsep=1pt,width=18pt,height=18pt]{\scriptsize{$F$}}{\scriptsize{$W$}} & $4$\\
\hline
$2$& $1$
\end{tabular}
&&
\begin{tabular}{c|c}
\diagbox[innerleftsep=1pt,innerrightsep=1pt,width=18pt,height=18pt]{\scriptsize{$F$}}{\scriptsize{$W$}} & $6$\\
\hline
$3$& $1$
\end{tabular}
\end{TAB}
\end{center}

We obtain the following filtrations on the compactly supported cohomology of $U$ using the localization sequence (Proposition \ref{prop:perverselocalization}). The filtrations on the cohomology of $U$ can be obtained from these by Poincar\'{e}-Verdier duality.

\begin{center}
\begin{TAB}{|c|c|c|c|}{|c|t|}
$\Gr_1^PH^2_c(U)$&$\Gr_2^PH^2_c(U)$&&$\Gr_3^PH^3_c(U)$\\
\begin{tabular}{c|c}
\diagbox[innerleftsep=1pt,innerrightsep=1pt,width=18pt,height=18pt]{\scriptsize{$F$}}{\scriptsize{$W$}} & $2$\\
\hline
$1$& $1$
\end{tabular}&
\begin{tabular}{c|c}
\diagbox[innerleftsep=1pt,innerrightsep=1pt,width=18pt,height=18pt]{\scriptsize{$F$}}{\scriptsize{$W$}} & $2$\\
\hline
$1$& $v$
\end{tabular}&&
\begin{tabular}{c|cc}
\diagbox[innerleftsep=1pt,innerrightsep=1pt,width=18pt,height=18pt]{\scriptsize{$F$}}{\scriptsize{$W$}} & $2$&$3$\\
\hline
$0$& $1$ & $1$\\
$1$ & $20-u$ & $w$\\
$2$ & $1$ & $w$\\
$3$ & $0$ & $1$
\end{tabular}
\end{TAB}
\smallskip

\begin{TAB}{|c|c|c|c|}{|c|t|}
$\Gr_3^PH^4_c(U)$&$\Gr_4^PH^4_c(U)$&& $\Gr_5^PH^6_c(U)$\\
\begin{tabular}{c|c}
\diagbox[innerleftsep=1pt,innerrightsep=1pt,width=18pt,height=18pt]{\scriptsize{$F$}}{\scriptsize{$W$}} & $4$\\
\hline
$2$& $u$
\end{tabular}&
\begin{tabular}{c|c}
\diagbox[innerleftsep=1pt,innerrightsep=1pt,width=18pt,height=18pt]{\scriptsize{$F$}}{\scriptsize{$W$}} & $4$\\
\hline
$2$& $v$
\end{tabular}&&
\begin{tabular}{c|c}
\diagbox[innerleftsep=1pt,innerrightsep=1pt,width=18pt,height=18pt]{\scriptsize{$F$}}{\scriptsize{$W$}} & $6$\\
\hline
$3$& $1$
\end{tabular}
\end{TAB}
\end{center}

\subsubsection{Tyurin degenerations}

Now let $\calX \to \Delta$ be a Tyurin degeneration of Calabi-Yau threefolds (see Definition \ref{def:tyurin}). Recall that such degenerations have length $2$. The central fibre $X_0$ is a union of two quasi-Fano threefolds $V_1 \cup_S V_2$ glued along an anticanonical K3 surface $S$. As in the previous subsection, the filtrations in these cases will depend upon three parameters $\check{u},\check{v},\check{w} \in \bZ$, defined as follows.
\begin{itemize}
\item If $r_i\colon H^2(V_i,\bQ) \to H^2(S,\bQ)$ denotes the restriction, for $i = 1,2$, then $\check{u} := 20 - \rank(\im(r_1) + \im(r_2))$. 
\item $\check{v} := h^{2,1}(V_1) + h^{2,1}(V_2)$.
\item $\check{w} := h^{1,1}(X)$.
\end{itemize}
In this case it follows from \cite[Corollary 8.2]{cycsncv} that $h^{2,1}(X) = \check{u} + \check{v} + 1$.

\begin{remark} Geometrically, $\rank(\im(r_1) + \im(r_2))$ is the rank of the lattice polarization on $S$ induced by restricting divisors from $V_1$ and $V_2$, and $\check{u}$ is the rank of its mirror lattice in the sense of \cite{mslpk3s}.
\end{remark}

With this setup, we describe below the mixed Hodge structures $(W_{\bullet},F^{\bullet})$ on the graded pieces of the filtration $P_{\bullet}$. The entries in the tables show the dimensions of the graded pieces $\Gr_F^p\Gr^W_q\Gr^P_lH^k$. All other graded pieces of $P_{\bullet}$ are zero.

The limiting mixed Hodge structure on the cohomology of $X$ is computed in \cite[Section 5.1]{mstdfcym}; the filtrations obtained are as follows.

\begin{center}
\begin{TAB}{|c|c|c|c|c|c|c|c|c|}{|c|t|}
$\Gr_0^PH^0_{\lim}(X)$&&$\Gr_2^PH^2_{\lim}(X)$ &&$\Gr_3^PH^3_{\lim}(X)$&& $\Gr_4^PH^4_{\lim}(X)$&& $\Gr_6^PH^6_{\lim}(X)$\\
\begin{tabular}{c|c}
\diagbox[innerleftsep=1pt,innerrightsep=1pt,width=18pt,height=18pt]{\scriptsize{$F$}}{\scriptsize{$W$}} & $0$\\
\hline
$0$& $1$
\end{tabular}
&&
\begin{tabular}{c|c}
\diagbox[innerleftsep=1pt,innerrightsep=1pt,width=18pt,height=18pt]{\scriptsize{$F$}}{\scriptsize{$W$}} & $2$\\
\hline
$1$& $\check{w}$
\end{tabular}
&&
\begin{tabular}{c|ccc}
\diagbox[innerleftsep=1pt,innerrightsep=1pt,width=18pt,height=18pt]{\scriptsize{$F$}}{\scriptsize{$W$}} & $2$ & $3$ & $4$\\
\hline
$0$& $1$ & $0$ & $0$ \\
$1$ & $\check{u}$ & $\check{v}$ & $1$ \\
$2$ & $1$ & $\check{v}$ & $\check{u}$\\
$3$ & $0$ & $0$ & $1$
\end{tabular}
&&
\begin{tabular}{c|c}
\diagbox[innerleftsep=1pt,innerrightsep=1pt,width=18pt,height=18pt]{\scriptsize{$F$}}{\scriptsize{$W$}} & $4$\\
\hline
$2$& $\check{w}$
\end{tabular}
&&
\begin{tabular}{c|c}
\diagbox[innerleftsep=1pt,innerrightsep=1pt,width=18pt,height=18pt]{\scriptsize{$F$}}{\scriptsize{$W$}} & $6$\\
\hline
$3$& $1$
\end{tabular}
\end{TAB}
\end{center}

For the cohomology of $\calX$, with Deligne's mixed Hodge structure, the filtrations are easily computed using the standard spectral sequence for a normal crossing degeneration. The filtrations on $H^{k}_{X_0}(\calX)$ may be obtained from these by Poincar\'{e} duality.

\begin{center}
\begin{TAB}{|c|c|c|c|c|c|}{|c|t|}
$\Gr_1^PH^0(\calX)$&&$\Gr_2^PH^2(\calX)$&$\Gr_3^PH^2(\calX)$&&$\Gr_4^PH^3(\calX)$\\
\begin{tabular}{c|c}
\diagbox[innerleftsep=1pt,innerrightsep=1pt,width=18pt,height=18pt]{\scriptsize{$F$}}{\scriptsize{$W$}} & $0$\\
\hline
$0$& $1$
\end{tabular}&&
\begin{tabular}{c|c}
\diagbox[innerleftsep=1pt,innerrightsep=1pt,width=18pt,height=18pt]{\scriptsize{$F$}}{\scriptsize{$W$}} & $2$\\
\hline
$1$& $1$
\end{tabular}&
\begin{tabular}{c|c}
\diagbox[innerleftsep=1pt,innerrightsep=1pt,width=18pt,height=18pt]{\scriptsize{$F$}}{\scriptsize{$W$}} & $2$\\
\hline
$1$& $\check{w}$
\end{tabular}&&
\begin{tabular}{c|cc}
\diagbox[innerleftsep=1pt,innerrightsep=1pt,width=18pt,height=18pt]{\scriptsize{$F$}}{\scriptsize{$W$}} & $2$ & $3$\\
\hline
$0$& $1$ & $0$\\
$1$ & $\check{u}$ & $\check{v}$\\
$2$ & $1$ & $\check{v}$
\end{tabular}
\end{TAB}
\smallskip

\begin{TAB}{|c|c|c|c|c|}{|c|t|}
$\Gr_4^PH^4(\calX)$&$\Gr_5^PH^4(\calX)$ && $\Gr_6^PH^6(\calX)$& $\Gr_7^PH^6(\calX)$\\
\begin{tabular}{c|c}
\diagbox[innerleftsep=1pt,innerrightsep=1pt,width=18pt,height=18pt]{\scriptsize{$F$}}{\scriptsize{$W$}} & $4$\\
\hline
$2$& $20 - \check{u}$
\end{tabular}
&
\begin{tabular}{c|c}
\diagbox[innerleftsep=1pt,innerrightsep=1pt,width=18pt,height=18pt]{\scriptsize{$F$}}{\scriptsize{$W$}} & $4$\\
\hline
$2$& $\check{w}$
\end{tabular}
&&
\begin{tabular}{c|c}
\diagbox[innerleftsep=1pt,innerrightsep=1pt,width=18pt,height=18pt]{\scriptsize{$F$}}{\scriptsize{$W$}} & $6$\\
\hline
$3$& $1$
\end{tabular}
&
\begin{tabular}{c|c}
\diagbox[innerleftsep=1pt,innerrightsep=1pt,width=18pt,height=18pt]{\scriptsize{$F$}}{\scriptsize{$W$}} & $6$\\
\hline
$3$& $1$
\end{tabular}
\end{TAB}
\end{center}

\subsubsection{Mirror symmetry} From the computations above, to verify Conjecture \ref{con:mirrorfiltrations} in this setting it suffices to demonstrate equality between the parameters $u,v,w$ for $Y$ and $\check{u},\check{v}, \check{w}$ for $X$.

Assume that Conjecture \ref{con:dht} holds for $\calX \to \Delta$ and $\pi\colon Y \to \bP^1$. Then the equalities $w = \check{w}$ and $u + v = \check{u} + \check{v}$ are just the usual mirror symmetry of Hodge numbers for $X$ and $Y$. Moreover, equality of $u$ and $\check{u}$ follows from the fact that $S$ and $Z$ are mirror in the sense of \cite{mslpk3s}. This proves Theorem \ref{thm:CY3evidence}(1). 

%Some progress towards establishing compatibility between these conjectures and the Batyrev-Borisov approach to mirror symmetry \cite{tmscycigtfv,dpmscyhtv,cycitv} has been made in \cite[Section 3]{mstdfcym}, but the equality of $u$ and $\check{u}$ depends subtly on the existence of reducible singular members of certain pencils of divisors on a toric Calabi-Yau threefold, which has only been verified in certain cases.

\subsubsection{An explicit example} Theorem \ref{thm:CY3evidence}(1) allows us to verify Conjecture \ref{con:mirrorfiltrations} in settings where Conjecture \ref{con:dht} is known to hold. A good source of such examples is provided by degenerations of Calabi-Yau threefolds to pairs of Fano threefolds of Picard rank one glued along a smooth anticanonical K3 surface $S$. In this case one must have $\check{u} = 1$, so $S$ is a K3 surface polarized by the rank one lattice $\langle 2n \rangle$ for some $n > 0$, and one can look for a fibration of the mirror Calabi-Yau threefold by K3 surfaces polarized by the mirror lattice $M_n := H \oplus E_8 \oplus E_8 \oplus \langle -2n \rangle$. This is made simpler by the fact that such K3-fibred Calabi-Yau threefolds have been completely classified in the papers \cite{cytfhrlpk3s} (for $n \geq 2$) and \cite{tfmsdp} (for $n = 1$). 

The explicit construction of such examples closely follows the approach from \cite[Section 5]{mstdfcym}. We give an illustrative example, which will be important for the proof of Theorem \ref{thm:CY3evidence}(2).

\begin{example}\label{ex:tyurinquintic}
Let $X_5$ be the quintic threefold in $\bP^4$. We consider the following two degenerations of $X_5$.
\begin{enumerate}
\item $X_5$ degenerates to the union of a hyperplane $V_1$ and a quartic surface $V_4$. To construct such a degeneration, consider the family of threefolds defined by $\{lq_4 =tf_5\} \subset \bP^4 \times \Delta$, where $l$, $q_4$, and $f_5$ are generic homogeneous equations of degrees $1$, $4$, and $5$ in the coordinates of $\bP^4$, and $t$ is a parameter on the complex disc $\Delta$. The central fibre $t = 0$ consists of a hyperplane and a quartic threefold meeting along $\{q_4 = l = 0\} \subset \bP^4$, which is a smooth K3 surface. Restriction of a hyperplane section induces a lattice polarization on this K3 surface by the rank one lattice $\langle 4 \rangle$. 

The fourfold $\{lq_4 =tf_5\}$ is singular along the curve $C = \{l = q_4 = t = f_5 = 0\}$, which has genus $51$. This curve of singularities may be resolved by blowing up $\{t = l = 0\}$ in the ambient space $\bP^4 \times \Delta$ and taking the strict transform; the result is a Tyurin degeneration of $X_5$. For this degeneration the parameters $\check{u},\check{v},\check{w}$ can be computed to be
\begin{align*}
\check{u} &= 20 - \rank(\langle 4 \rangle) = 20 -1 = 19, \\
\check{v} &= h^{2,1}(V_1) + h^{2,1}(V_4) + g(C) = 0 + 30 + 51 = 81,\\
\check{w} &= h^{1,1}(X_5) = 1,
\end{align*}
where the formula for $\check{v}$ follows from standard formulas for the cohomology of a blow-up (see, for example, \cite[Theorem 7.31]{htcagi}.
\item $X_5$ degenerates to the union of a quadric $V_2$ and a cubic surface $V_3$. To construct such a degeneration, consider the family of threefolds defined by $\{q_2q_3 =tf_5\} \subset \bP^4 \times \Delta$, where $q_2$, $q_3$, and $f_5$ are generic homogeneous equations of degrees $2$, $3$, and $5$ in the coordinates of $\bP^4$, and $t$ is a parameter on the complex disc $\Delta$. The central fibre $t = 0$ consists of a quadric and a cubic threefold meeting along $\{q_2 = q_3 = 0\} \subset \bP^4$, which is a smooth K3 surface. Restriction of a hyperplane section induces a lattice polarization on this K3 surface by the rank one lattice $\langle 6 \rangle$. 

The fourfold $\{q_2q_3 =tf_5\}$ is singular along the curve $C' = \{q_2 = q_3 = t = f_5 = 0\}$, which has genus $76$. This curve of singularities may be resolved by blowing up $\{t = q_2 = 0\}$ in the ambient space $\bP^4 \times \Delta$ and taking the strict transform; the result is a Tyurin degeneration of $X_5$. For this degeneration the parameters $\check{u},\check{v},\check{w}$ can be computed to be
\begin{align*}
\check{u} &= 20 - \rank(\langle 6 \rangle) = 20 -1 = 19, \\
\check{v} &= h^{2,1}(V_2) + h^{2,1}(V_3) + g(C') = 0 + 5 + 76 = 81,\\
\check{w} &= h^{1,1}(X_5) = 1.
\end{align*}
\end{enumerate}

Now we proceed to the quintic mirror threefold $Y$. It was shown in \cite[Theorem 5.10]{flpk3sm} that this threefold admits K3 fibrations $Y \to \bP^1$ polarized by the rank $19$ lattices $M_2$ and $M_3$. For the $M_2$-polarized fibration, the parameter $v$ has been computed before in \cite[Example 3.11]{cytfmqk3s}. For the $M_3$-polarized fibration, it follows easily from the explicit descriptions of the singular fibres of such fibrations from \cite[Section 3.2]{cytfhrlpk3s}.
\begin{enumerate}
\item The $M_2$-polarized fibration has two reducible singular fibres with $31$ and $52$ components respectively. We thus obtain
\begin{align*}
{u} &= \rank(M_2) = 19, \\
{v} &= \sum_{p \in \bP^1} (\rho(Y_p) - 1) = 30 + 51 = 81\\
{w} &= h^{2,1}(X_5) = 1.
\end{align*}
\item The $M_3$-polarized fibration has two reducible singular fibres with $6$ and $77$ components respectively. We thus obtain
\begin{align*}
{u} &= \rank(M_3) = 19, \\
{v} &= \sum_{p \in \bP^1} (\rho(Y_p) - 1) = 5 + 76 = 81\\
{w} &= h^{2,1}(X_5) = 1.
\end{align*}
\end{enumerate}

We have thus verified Conjecture \ref{con:mirrorfiltrations} explicitly in this setting.
\end{example}

\begin{remark} In light of the conjectures of \cite{mstdfcym}, it is no coincidence that the numbers ($30$, $51$, $5$, $76$) showing up in the decompositions of $v$ and $\check{v}$ above are identical. Indeed, these are conjectured to represent invariants of certain quasi-Fano/LG mirror pairs.
\end{remark}

\subsection{Elliptic fibrations and degenerations of length 3}\label{sec:CY3elliptic}

The picture for elliptic fibrations and degenerations of length $3$ of Calabi-Yau threefolds is substantially more complicated and general results are beyond our existing methods. There are two main difficulties. Firstly, a given Calabi-Yau may admit many distinct degenerations of length $3$ and its mirror may have many elliptic fibrations, and it is not easy to see how these should be matched together. Secondly, it is not straightforward to compute the filtrations on the elliptic fibration side: the perverse filtration, in particular, requires detailed geometric knowledge of the singular fibre structure of the elliptic fibration, which is not readily available in many cases.

In this section we will present an explicit example in this setting, involving a degeneration of length $3$ of the quintic threefold and an elliptic fibration on its mirror. A mirror relationship between this degeneration and fibration was first observed by Doran, Kostiuk, and You \cite[Section 7]{dfhrlgm}, and the geometry of the elliptic fibration has been well-studied in \cite{cymrsrmt} and \cite{dfhrlgm}. This will allow us to verify Conjecture \ref{con:mirrorfiltrations} in this setting and thereby prove Theorem \ref{thm:CY3evidence}(2)

\subsubsection{A degeneration of length 3 of the quintic threefold}\label{sec:type3CY3}

We begin by constructing a degeneration of length $3$ of the quintic threefold $X_5 \subset \bP^4$ and computing its filtrations. This degeneration is a common degeneration of the two Tyurin degenerations from Example \ref{ex:tyurinquintic}; this will be important when we come to study its mirror.

Consider the degeneration of $X_5$ to a union of two hyperplanes $V_1$ and $V_1'$ and a cubic surface $V_3$. To construct such a degeneration, consider the family of threefolds defined by $\{ll'q_3 =tf_5\} \subset \bP^4 \times \Delta$, where $l$, $l'$, $q_3$, and $f_5$ are generic homogeneous equations of degrees $1$, $1$, $3$, and $5$ in the coordinates of $\bP^4$, and $t$ is a parameter on the complex disc $\Delta$. The central fibre $t = 0$ consists of two hyperplanes $V_1 := \{l = 0\}$ and $V_1' := \{l' = 0\}$ and a cubic threefold $V_3:=\{q_3 = 0\}$. The double intersection $V_1 \cap V_1'$ is a copy of $\bP^2$, whilst $V_1 \cap V_3$ and $V_1' \cap V_3$ are both isomorphic to cubic surfaces in $\bP^3$. Finally, the triple intersection $V_1 \cap V_1' \cap V_3$ is isomorphic to a cubic curve in $\bP^2$, which has genus $1$.

The fourfold $\{ll'q_3 =tf_5\}$ is singular along the curves $C_1 = \{l = l' = t = f_5 = 0\}$, which has genus $6$, and $C_2 = \{l = q_3 = t = f_5 = 0\}$ and $C_3 = \{l' = q_3 = t = f_5 = 0\}$, which both have genus $31$. We resolve these curves of singularities by blowing up $\{t = q_3 = 0\}$ followed by $\{t = l = 0\}$ in the ambient space $\bP^4 \times \Delta$ and taking the strict transform; the result is a $K$-trivial semistable degeneration $\calX \to \Delta$, with general fibre a quintic threefold $X_5$. 

The central fibre of this degeneration consists of three components $\hat{V}_1$, $\hat{V}_1'$ and $\hat{V}_3$, which are the strict transforms of $V_1$, $V_1'$, and $V_3$ respectively. $\hat{V}_1$ is isomorphic to $\bP^3$ blown up in a curve of genus $6$, $\hat{V}_1'$ is isomorphic to $\bP^3$, and $\hat{V}_3$ is isomorphic to a cubic threefold blown up in two curves of genus $31$. The double locus $\hat{V}_1 \cap \hat{V}_1'$ is isomorphic to $\bP^2$, the double locus $\hat{V}_1' \cap \hat{V}_3$ is isomorphic to a cubic surface, and $\hat{V}_1 \cap \hat{V}_3$ is isomorphic to a cubic surface that has been blown up in $15$ points on a smooth anticanonical curve. The triple locus is a smooth curve of genus $1$.

If we let $V^{[i]}$ denote the disjoint union of the codimension $i$ components of the central fibre, then the difficult parts in the spectral sequence for a normal crossing degeneration can be computed to be:
\begin{align*}
h^2(V^{[0]}) &= h^2(\hat{V_1}) + h^2(\hat{V}_1') + h^2(\hat{V}_3) = 2 + 1 + 3 = 6,\\
h^3(V^{[0]}) &= h^3(\hat{V_1}) + h^3(\hat{V}_1') + h^3(\hat{V}_3) =  (0 + 2.6) + 0 + (10 + 2.31 + 2.31) = 146,\\
h^2(V^{[1]}) &= h^2(\hat{V}_1 \cap \hat{V}_1') + h^2(\hat{V}_1' \cap \hat{V}_3) + h^2(\hat{V}_1 \cap \hat{V}_3) = 1 + 7 + (7+15) = 30.
\end{align*}
Computation in this spectral sequence yields the weight filtration in Deligne's mixed Hodge structure and, combined with some diagram chasing in the Clemens-Schmid sequence, we obtain the following mixed Hodge structures $(W_{\bullet},F^{\bullet})$ on the graded pieces of the filtration $P_{\bullet}$. The entries in the tables show the dimensions of the graded pieces $\Gr_F^p\Gr^W_q\Gr^P_lH^k$. All other graded pieces of $P_{\bullet}$ are zero.

For the cohomology of $X_5$, with its limiting mixed Hodge structure, the filtrations are as follows. It follows from this description that this degeneration has length $3$.
\begin{center}
\begin{TAB}{|c|c|c|c|c|}{|c|t|}
$\Gr_0^PH^0_{\lim}(X)$&&$\Gr_2^PH^2_{\lim}(X)$ &&$\Gr_3^PH^3_{\lim}(X)$\\
\begin{tabular}{c|c}
\diagbox[innerleftsep=1pt,innerrightsep=1pt,width=18pt,height=18pt]{\scriptsize{$F$}}{\scriptsize{$W$}} & $0$\\
\hline
$0$& $1$
\end{tabular}
&&
\begin{tabular}{c|c}
\diagbox[innerleftsep=1pt,innerrightsep=1pt,width=18pt,height=18pt]{\scriptsize{$F$}}{\scriptsize{$W$}} & $2$\\
\hline
$1$& $1$
\end{tabular}
&&
\begin{tabular}{c|ccccc}
\diagbox[innerleftsep=1pt,innerrightsep=1pt,width=18pt,height=18pt]{\scriptsize{$F$}}{\scriptsize{$W$}} & $1$ & $2$ & $3$ & $4$ & $5$\\
\hline
$0$& $1$ & $0$ & $0$ & $0$ & $0$ \\
$1$ & $1$ & $26$ & $74$ & $0$ & $0$ \\
$2$ & $0$ & $0$ &$74$ & $26$ & $1$\\
$3$ & $0$ & $0$ & $0$ & $0$ & $1$
\end{tabular}
\end{TAB}
\smallskip

\begin{TAB}{|c|c|c|}{|c|t|}
 $\Gr_4^PH^4_{\lim}(X)$&& $\Gr_6^PH^6_{\lim}(X)$\\
\begin{tabular}{c|c}
\diagbox[innerleftsep=1pt,innerrightsep=1pt,width=18pt,height=18pt]{\scriptsize{$F$}}{\scriptsize{$W$}} & $4$\\
\hline
$2$& $1$
\end{tabular}
&&
\begin{tabular}{c|c}
\diagbox[innerleftsep=1pt,innerrightsep=1pt,width=18pt,height=18pt]{\scriptsize{$F$}}{\scriptsize{$W$}} & $6$\\
\hline
$3$& $1$
\end{tabular}
\end{TAB}
\end{center}

For the cohomology of $\calX$, with Deligne's mixed Hodge structure, the filtrations are as follows. The filtrations on $H^{k}_{X_0}(\calX)$ may be obtained from these by Poincar\'{e} duality.

\begin{center}
\begin{TAB}{|c|c|c|c|c|c|}{|c|t|}
$\Gr_1^PH^0(\calX)$&&$\Gr_2^PH^2(\calX)$&$\Gr_3^PH^2(\calX)$&&$\Gr_4^PH^3(\calX)$\\
\begin{tabular}{c|c}
\diagbox[innerleftsep=1pt,innerrightsep=1pt,width=18pt,height=18pt]{\scriptsize{$F$}}{\scriptsize{$W$}} & $0$\\
\hline
$0$& $1$
\end{tabular}&&
\begin{tabular}{c|c}
\diagbox[innerleftsep=1pt,innerrightsep=1pt,width=18pt,height=18pt]{\scriptsize{$F$}}{\scriptsize{$W$}} & $2$\\
\hline
$1$& $2$
\end{tabular}&
\begin{tabular}{c|c}
\diagbox[innerleftsep=1pt,innerrightsep=1pt,width=18pt,height=18pt]{\scriptsize{$F$}}{\scriptsize{$W$}} & $2$\\
\hline
$1$& $1$
\end{tabular}&&
\begin{tabular}{c|ccc}
\diagbox[innerleftsep=1pt,innerrightsep=1pt,width=18pt,height=18pt]{\scriptsize{$F$}}{\scriptsize{$W$}} & $1$ & $2$ & $3$\\
\hline
$0$& $1$ & $0$ & $0$\\
$1$ & $1$& $26$ & $73$\\
$2$ & $0$ & $0$ & $73$
\end{tabular}
\end{TAB}
\smallskip

\begin{TAB}{|c|c|c|c|c|}{|c|t|}
$\Gr_4^PH^4(\calX)$&$\Gr_5^PH^4(\calX)$ && $\Gr_6^PH^6(\calX)$& $\Gr_7^PH^6(\calX)$\\
\begin{tabular}{c|c}
\diagbox[innerleftsep=1pt,innerrightsep=1pt,width=18pt,height=18pt]{\scriptsize{$F$}}{\scriptsize{$W$}} & $4$\\
\hline
$2$& $2$
\end{tabular}
&
\begin{tabular}{c|c}
\diagbox[innerleftsep=1pt,innerrightsep=1pt,width=18pt,height=18pt]{\scriptsize{$F$}}{\scriptsize{$W$}} & $4$\\
\hline
$2$& $1$
\end{tabular}
&&
\begin{tabular}{c|c}
\diagbox[innerleftsep=1pt,innerrightsep=1pt,width=18pt,height=18pt]{\scriptsize{$F$}}{\scriptsize{$W$}} & $6$\\
\hline
$3$& $2$
\end{tabular}
&
\begin{tabular}{c|c}
\diagbox[innerleftsep=1pt,innerrightsep=1pt,width=18pt,height=18pt]{\scriptsize{$F$}}{\scriptsize{$W$}} & $6$\\
\hline
$3$& $1$
\end{tabular}
\end{TAB}
\end{center}

\subsubsection{An elliptic fibration on the mirror quintic threefold}

Conjecture \ref{con:mirrorfiltrations} predicts that the mirror quintic threefold $Y$ admits an elliptic fibration whose Hodge, weight, and perverse Leray filtrations mirror those for the degeneration of length $3$ above. In this subsection we describe such a fibration and show that it has the required properties.

The degeneration of length $3$ above occurred as a common degeneration of the two Tyurin degenerations studied in Example \ref{ex:tyurinquintic}. It seems reasonable to expect, therefore, that the mirror elliptic fibration enjoys some compatibility with the $M_2$- and $M_3$-polarized K3 surface fibrations studied in that example.

An elliptic fibration with the desired properties has been studied by Doran and Malmendier \cite[Section 7.3]{cymrsrmt} and Doran, Kostiuk, and You \cite[Section 7]{dfhrlgm}. However, these two descriptions are not immediately compatible: each describes the fibration on an affine chart in the base of the fibration and the chosen affine charts are quite different. We opt to use the description from \cite[Section 7]{dfhrlgm}, as it makes both of the K3 fibrations readily apparent.

This elliptic fibration is described as follows. There is a family of elliptic curves with section over $\bP^1$, with base parameter $\lambda$, with singular fibres of Kodaira types $\mathrm{I}_3$ at $\lambda = 0$, $\mathrm{IV}^*$ at $\lambda = \infty$, and $\mathrm{I}_1$ at $\lambda = \frac{1}{3^3}$; \cite{dfhrlgm} call this the \emph{mirror cubic family}. In \cite[Section 7]{dfhrlgm}, Doran, Kostiuk, and You show that there is an open set in the quintic mirror threefold $Y$ on which the elliptic fibration is isomorphic to the family of elliptic curves defined by the pull-back of the mirror cubic family under the map $\bA^2 \to \bP^1$ given by
\[ \lambda = \frac{k}{x(1-y)(y-x)^3},\]
for some constant $k$.

In this description, lines $\{x = c\} \subset \bA^2$ (resp. $\{y = c\} \subset \bA^2$), for $c$ constant, define elliptic surfaces in $Y$ with section and singular fibres of Kodaira type $\mathrm{IV}^*$ at $y=1$ (resp. $x = 0$), type $\mathrm{I}_{12}$ at infinity, plus an additional $4 \times \mathrm{I}_1$. Such surfaces are ramified $4$-fold covers of the mirror cubic family. Varying $c$ gives two pencils of $M_2$-polarized K3 surfaces, defining $M_2$-polarized K3 fibrations on $Y$ (see \cite[Remark 7.2]{dfhrlgm}; compare also \cite[Lemma 7.3]{cymrsrmt}).

Moreover, lines $\{x - y = c\} \subset \bA^2$, for $c$ constant, define elliptic surfaces in $Y$ with section and singular fibres of Kodaira type $2 \times \mathrm{IV}^*$ at the intersection with the lines $\{x = 0\}$ and $\{y = 1\}$, type $\mathrm{I}_6$ at infinity, plus an additional $2 \times \mathrm{I}_1$. Such surfaces are ramified double covers of the mirror cubic family. Varying $c$ gives a pencil of $M_3$-polarized K3 surfaces, defining an $M_3$-polarized K3 fibration on $Y$ (see \cite[Remark 7.2]{dfhrlgm}).

Now we set up our fibration, as in Section \ref{sec:notation}. Define a base point free linear system on $Y$ by taking the closure of preimages of general lines from $\bA^2$. This gives a map $\pi\colon Y \to B = \bP^2$, whose general fibre is an elliptic curve (note that the special fibres may be quite unpleasant, even involving components of dimension $2$, but this is not forbidden in our construction). We embed $\bP^2$ into itself by the trivial embedding, so that $C \subset \bP^2$ is a general line, and we let $Z = \pi^{-1}(C) \subset Y$. It follows from the description above that $Z$ is an elliptic surface with section and singular fibres of Kodaira types $2\times \mathrm{IV}^*$ where $C$ intersects the lines $\{x = 0\}$ and $\{y = 1\}$, type $\mathrm{I}_{15}$ at infinity, plus an additional $5 \times \mathrm{I}_1$. This surface has topological Euler characteristic $36$ and is a ramified $5$-fold cover of the mirror cubic family.

With this information, using Proposition \ref{prop:cupperverse} it is straightforward to compute the mixed Hodge structures $(W_{\bullet},F^{\bullet})$ on the graded pieces of the perverse filtration $P_{\bullet}$ for the cohomology $H^k(Z)$. We obtain the following filtrations.

\begin{center}
\begin{TAB}{|c|c|c|c|c|c|c|}{|c|t|}
$\Gr_1^PH^0(Z)$&&$\Gr_1^PH^2(Z)$&$\Gr_2^PH^2(Z)$&$\Gr_3^PH^2(Z)$&& $\Gr_3^PH^4(Z)$\\
\begin{tabular}{c|c}
\diagbox[innerleftsep=1pt,innerrightsep=1pt,width=18pt,height=18pt]{\scriptsize{$F$}}{\scriptsize{$W$}} & $0$\\
\hline
$0$& $1$
\end{tabular}&&
\begin{tabular}{c|c}
\diagbox[innerleftsep=1pt,innerrightsep=1pt,width=18pt,height=18pt]{\scriptsize{$F$}}{\scriptsize{$W$}} & $2$\\
\hline
$1$&$1$
\end{tabular}&
\begin{tabular}{c|c}
\diagbox[innerleftsep=1pt,innerrightsep=1pt,width=18pt,height=18pt]{\scriptsize{$F$}}{\scriptsize{$W$}} & $2$\\
\hline
$0$& $2$\\
$1$&$28$\\
$2$&$2$
\end{tabular}&
\begin{tabular}{c|c}
\diagbox[innerleftsep=1pt,innerrightsep=1pt,width=18pt,height=18pt]{\scriptsize{$F$}}{\scriptsize{$W$}} & $2$\\
\hline
$1$& $1$
\end{tabular}
&&
\begin{tabular}{c|c}
\diagbox[innerleftsep=1pt,innerrightsep=1pt,width=18pt,height=18pt]{\scriptsize{$F$}}{\scriptsize{$W$}} & $4$\\
\hline
$2$& $1$
\end{tabular}
\end{TAB}
\end{center}

Proposition \ref{prop:cupperverse}, applied to $Y$, together with a diagram chase in the localization exact sequence (Proposition \ref{prop:perverselocalization}) is sufficient to compute most of the structure of the filtrations on the cohomology of $Y$ and $U$ where, as usual, $U := Y - Z$. To complete the computation, it suffices to find the image of the pull-back $i^*\colon H^{1,1}(Y) \to H^{1,1}(Z)$ appearing in the localization sequence, where $i\colon Z \hookrightarrow Y$ denotes the inclusion.

To compute this image, we use the fact that the structure of the $M_2$-fibration on $Y$ is completely understood, by the results of \cite{cytfmqk3s} (see also \cite{cytfhrlpk3s} for a generalization of these results). In particular, \cite[Example 3.11]{cytfmqk3s} gives the following set of divisors which generate $H^{1,1}(Y)$.
\begin{enumerate}
\item A general K3 fibre in the $M_2$-polarized fibration, corresponding to a line $\{x = c\} \subset \bA^2$, for general $c$.
\item $30$ divisors are irreducible components of a singular fibre in the $M_2$-polarized fibration, corresponding to the line $\{x = 0\} \subset \bA^2$.
\item $51$ divisors are irreducible components of a semistable singular fibre in the $M_2$-polarized fibration, corresponding to the line at infinity.
\item $19$ monodromy invariant divisors arising from the $M_2$-polarization on a general fibre. Over $\bQ$, the space spanned by these divisors is generated by irreducible components of the $\mathrm{IV}^*$ and $\mathrm{I}_{12}$ singular fibres in the elliptic fibration on an $M_2$-polarized K3 surface, plus the classes of a section and a fibre; this corresponds to the finite index embedding of the lattice $H \oplus E_6 \oplus A_{11}$ into $M_2$ (see, for example, \cite[Section 6.3]{cmsak3s}).
\end{enumerate}
Now we examine how these divisors restrict to $Z$.
\begin{enumerate}
\item The intersection of $C$ with $\{x = c\}$, for general $c$, gives a smooth elliptic fibre in $Z$. So the class in $H^{1,1}(Y)$ of a general K3 fibre in the $M_2$-polarized fibration restricts to the class of an elliptic fibre in $H^{1,1}(Z)$.
\item The intersection of $C$ with $\{x=0\}$ gives a singular fibre of type $\mathrm{IV}^*$ in $Z$ and the $30$ divisors over $\{x = 0\}$ in $H^{1,1}(Y)$ restrict to the $6$ classes in $H^{1,1}(Z)$ coming from the first $\mathrm{IV}^*$ fibre.
\item The intersection of $C$ with the line at infinity gives a singular fibre of type $\mathrm{I}_{15}$ in $Z$ and the $51$ divisors over the line at infinity in $H^{1,1}(Y)$ restrict to the $14$ classes in $H^{1,1}(Z)$ coming from the $\mathrm{I}_{15}$ fibre.
\item The $6$ monodromy invariant divisors in $H^{1,1}(Y)$ coming  from components of the $\mathrm{IV}^*$ fibre in an $M_2$-polarized K3 surface lie over the line  $\{y=1\}$. The intersection of $C$ with $\{y=1\}$ gives the second  $\mathrm{IV}^*$ fibre in $Z$, so these $6$ divisors restrict to the $6$ classes in $H^{1,1}(Z)$ coming from this  $\mathrm{IV}^*$ fibre. The $12$ monodromy invariant divisors in $H^{1,1}(Y)$ coming  from components of the $\mathrm{I}_{12}$ fibre in an $M_2$-polarized K3 surface all lie over the base point at infinity of the linear system $\{x = c\}$, and $C$ does not pass through this point, so these restrict trivially to $H^{1,1}(Z)$. Finally, the $2$ monodromy invariant divisors in $H^{1,1}(Y)$ coming from a section and fibre in an $M_2$-polarized K3 surface restrict to a section and fibre of the elliptic fibration on $Z$; of these two we already accounted for the fibre in (1), above. Combining everything, we thus see that the $19$ monodromy invariant divisors arising from the $M_2$-polarization restrict to $7$ classes on $Z$ that have not already been accounted for.
\end{enumerate}
Combining all of the above together, we conclude that the image of the pull-back $i^*\colon H^{1,1}(Y) \to H^{1,1}(Z)$ spans a subspace of $H^{1,1}(Z)$ with dimension $1+6+14+7 = 28$.

Using this, together with a diagram chase in the localization exact sequence (Proposition \ref{prop:perverselocalization}), we can compute the mixed Hodge structures $(W_{\bullet},F^{\bullet})$ on the graded pieces of the perverse filtration $P_{\bullet}$.  For the cohomology of $Y$ we have the following.

\begin{center}
\begin{TAB}{|c|c|c|c|c|c|c|}{|c|t|}
$\Gr_2^PH^0(Y)$&&$\Gr_2^PH^2(Y)$&$\Gr_3^PH^2(Y)$&$\Gr_4^PH^2(Y)$ && $\Gr_3^PH^3(Y)$\\
\begin{tabular}{c|c}
\diagbox[innerleftsep=1pt,innerrightsep=1pt,width=18pt,height=18pt]{\scriptsize{$F$}}{\scriptsize{$W$}} & $0$\\
\hline
$0$& $1$
\end{tabular}&&
\begin{tabular}{c|c}
\diagbox[innerleftsep=1pt,innerrightsep=1pt,width=18pt,height=18pt]{\scriptsize{$F$}}{\scriptsize{$W$}} & $2$\\
\hline
$1$& $74$
\end{tabular}&
\begin{tabular}{c|c}
\diagbox[innerleftsep=1pt,innerrightsep=1pt,width=18pt,height=18pt]{\scriptsize{$F$}}{\scriptsize{$W$}} & $2$\\
\hline
$1$& $26$ 
\end{tabular}&
\begin{tabular}{c|c}
\diagbox[innerleftsep=1pt,innerrightsep=1pt,width=18pt,height=18pt]{\scriptsize{$F$}}{\scriptsize{$W$}} & $2$\\
\hline
$1$& $1$
\end{tabular}
&&
\begin{tabular}{c|c}
\diagbox[innerleftsep=1pt,innerrightsep=1pt,width=18pt,height=18pt]{\scriptsize{$F$}}{\scriptsize{$W$}} & $3$\\
\hline
$0$& $1$\\
$1$ & $1$\\
$2$&$1$\\
$3$ & $1$
\end{tabular}
\end{TAB}
\smallskip

\begin{TAB}{|c|c|c|c|c|}{|c|t|}
$\Gr_2^PH^4(Y)$&$\Gr_3^PH^4(Y)$&$\Gr_4^PH^4(Y)$ && $\Gr_4^PH^6(Y)$\\
\begin{tabular}{c|c}
\diagbox[innerleftsep=1pt,innerrightsep=1pt,width=18pt,height=18pt]{\scriptsize{$F$}}{\scriptsize{$W$}} & $4$\\
\hline
$2$& $1$
\end{tabular}&
\begin{tabular}{c|c}
\diagbox[innerleftsep=1pt,innerrightsep=1pt,width=18pt,height=18pt]{\scriptsize{$F$}}{\scriptsize{$W$}} & $4$\\
\hline
$2$ & $26$
\end{tabular}&
\begin{tabular}{c|c}
\diagbox[innerleftsep=1pt,innerrightsep=1pt,width=18pt,height=18pt]{\scriptsize{$F$}}{\scriptsize{$W$}} & $4$\\
\hline
$2$& $74$
\end{tabular}
&&
\begin{tabular}{c|c}
\diagbox[innerleftsep=1pt,innerrightsep=1pt,width=18pt,height=18pt]{\scriptsize{$F$}}{\scriptsize{$W$}} & $6$\\
\hline
$3$& $1$
\end{tabular}
\end{TAB}
\end{center}

Finally, for the compactly supported cohomology of $U$ we have the following filtrations. The filtrations on the cohomology of $U$ can be obtained from these by Poincar\'{e}-Verdier duality.

\begin{center}
\begin{TAB}{|c|c|c|}{|c|t|}
$\Gr_2^PH^2_c(U)$&&$\Gr_3^PH^3_c(U)$\\
\begin{tabular}{c|c}
\diagbox[innerleftsep=1pt,innerrightsep=1pt,width=18pt,height=18pt]{\scriptsize{$F$}}{\scriptsize{$W$}} & $2$\\
\hline
$1$& $73$
\end{tabular}&&
\begin{tabular}{c|cc}
\diagbox[innerleftsep=1pt,innerrightsep=1pt,width=18pt,height=18pt]{\scriptsize{$F$}}{\scriptsize{$W$}} & $2$&$3$\\
\hline
$0$& $2$ & $1$\\
$1$ & $2$ & $1$\\
$2$ & $2$ & $1$\\
$3$ & $0$ & $1$
\end{tabular}
\end{TAB}
\smallskip

\begin{TAB}{|c|c|c|c|c|}{|c|t|}
$\Gr_2^PH^4_c(U)$&$\Gr_3^PH^4_c(U)$&$\Gr_4^PH^4_c(U)$&& $\Gr_4^PH^6_c(U)$\\
\begin{tabular}{c|c}
\diagbox[innerleftsep=1pt,innerrightsep=1pt,width=18pt,height=18pt]{\scriptsize{$F$}}{\scriptsize{$W$}} & $4$\\
\hline
$2$& $1$
\end{tabular}&
\begin{tabular}{c|c}
\diagbox[innerleftsep=1pt,innerrightsep=1pt,width=18pt,height=18pt]{\scriptsize{$F$}}{\scriptsize{$W$}} & $4$\\
\hline
$2$& $26$
\end{tabular}&
\begin{tabular}{c|c}
\diagbox[innerleftsep=1pt,innerrightsep=1pt,width=18pt,height=18pt]{\scriptsize{$F$}}{\scriptsize{$W$}} & $4$\\
\hline
$2$& $73$
\end{tabular}&&
\begin{tabular}{c|c}
\diagbox[innerleftsep=1pt,innerrightsep=1pt,width=18pt,height=18pt]{\scriptsize{$F$}}{\scriptsize{$W$}} & $6$\\
\hline
$3$& $1$
\end{tabular}
\end{TAB}
\end{center}

Comparing to the computation in Subsection \ref{sec:type3CY3}, one can verify that Conjecture \ref{con:mirrorfiltrations} holds in this setting. This completes the proof of Theorem \ref{thm:CY3evidence}(2).

\bibliographystyle{amsalpha}
\bibliography{publications,preprints}
\end{document}